\newtheorem{theorem}{Theorem}
\newtheorem{lemma}{Lemma}
\newtheorem{example}{Example}
\DeclareMathOperator{\dif}{d}
\title{On construction of a global numerical solution for a semilinear singularly--perturbed reaction diffusion boundary value problem}
\author{Samir Karasulji\'c\footnote{corresponding author}\:$^{,\thinspace}$\footnote{University of Tuzla, Faculty of Sciences and Mathematics, 
		                Univerzitetska br. 4, 75 000 Tuzla, Bosnia and Herzegovina, email:\texttt{samir.karasuljic@untz.ba}} 
	     and Hidajeta Ljevakovi\'c\footnote{University of Tuzla, Faculty of Sciences and Mathematics, Univerzitetska br. 4, 75 000 Tuzla, Bosnia and Herzegovina, email: \texttt{hidajeta1993@gmail.com}}}
\date{}
\begin{document}
\maketitle
\begin{abstract}
	A class of different schemes for the numerical solving of semilinear singularly—perturbed reaction—diffusion boundary--value problems was constructed. The stability of the difference schemes was proved, and the existence and uniqueness of a numerical solution were shown. After that,  the uniform convergence with respect to a perturbation parameter $\varepsilon$ on a modified Shishkin mesh of order 2 has been proven. For such a discrete solution, a global solution based on a linear spline was constructed, also the error of this solution is in expected boundaries. Numerical experiments at the end of the paper, confirm the theoretical results. The global solutions based on a natural cubic spline, and the experiments with Liseikin, Shishkin and modified Bakhvalov meshes are included in the numerical experiments as well.
\end{abstract}

\section{Introduction}

We consider the semilinear boundary--value singularly--perturbed problem 
\begin{subequations}
	\begin{equation}\label{problem1}
	\varepsilon^2y''-f(x,y)=0,\:x\in(0,1),\;y(0)=y(1)=0,
	\end{equation}
with the condition 
    \begin{equation}\label{problem2}
    \frac{\partial f(x,y)}{\partial y}:=f_y\geqslant m>0,
    \end{equation}	
\end{subequations}
where $0<\varepsilon<<$ is a small perturbation parameter, and  $m$ is a positive constant, $f$ is a nonlinear function, $f(x,y)\in C^{k}\left([0,1]\times\mathbb{R}\right),$ $k\geqslant 2.$ The problem \eqref{problem1} under the condition \eqref{problem2} has a unique solution, (see Lorenz \cite{lorenz1982stability}). It's a well-known fact in theory that the exact solution to \eqref{problem1}--\eqref{problem2} has two exponential boundary layers, i.e. near the end points $x=0$ and $x=1.$   

Differential equations like \eqref{problem1} and similar occur in mathematical modeling of many problems in physics, chemistry, biology, engineering sciences, economics and even  social sciences.  Numerical solutions of singularly--perturbed boundary--value problems obtained by some classical methods are usually useless. That is because the exact solutions of the singularly--perturbed boundary--value problems depend on the perturbation parameter $\varepsilon,$ but classical methods don't take in account the influence of the perturbation parameter.  The singularly--perturbed problems require special developed numerical methods in order to obtain the accuracy, which is uniform respect to the parameter $\varepsilon.$   Numerical methods that act uniformly well for all the values of the singular perturbation parameter  are called $\varepsilon$-uniformly convergent numerical methods. 

Many authors have worked on the numerical solution of the problem \eqref{problem1}--\eqref{problem2} with different assumptions about the function $f,$  as well as more general nonlinear problems. There were many constructed $\varepsilon$--uniformly convergent difference schemes of order 2 and higher (Herceg \cite{herceg1990}, Herceg, Surla and Rapaji{\'c} \cite{herceg1991}, Herceg and Miloradovi{\'c} \cite{herceg2003}, Herceg and Herceg \cite{herceg2003a}, Kopteva and Lin\ss\, \cite{kopteva2001}, Kopteva and Stynes \cite{kopteva2001robust,kopteva2004}, Kopteva, Pickett and Purtill \cite{kopteva2009}, Lin\ss, Roos and Vulanovi{\'c} \cite{linss2000uniform}, Sun and Stynes \cite{stynes1996}, Stynes and Kopteva \cite{stynes2006numerical}, Surla and Uzelac \cite{surla2003}, Vulanovi{\'c} \cite{vulanovic1983numerical, vulanovic1989, vulanovic1991second, vulanovic1993, vulanovic2004}, etc.

In the paper \cite{boglaev1984approximate} Boglaev introduced a new method for the numerical solving of the problem \eqref{problem1}--\eqref{problem2}, using the representation of the exact problem to \eqref{problem1}--\eqref{problem2} via the Green function. In this paper we use this method to construct a new different scheme.

The author's results in the numerical solving of the problem \eqref{problem1}--\eqref{problem2} and others results can be seen in 
\cite{samir2011scheme},  \cite{samir2011skoplje}, \cite{samir2012class}, \cite{samir2013collocation}, 
\cite{samir2015uniformlyconvergent},  \cite{samir2015construction},  \cite{samir2012uniformnly},   \cite{samir2015uniformly},  
\cite{karasuljic2019class},   
\cite{samir2018uniformly}, \cite{samir2017construction}, 
\cite{liseikin2020numerical}, \cite{liseikin2019rules}.

\section{Theoretical background}
The estimates of solution's derivatives are a very important tool in the analysis of numerical methods considering the singularly--perturbed boundary--value problems. The construction of layer--adapted meshes is based on these estimates, also in the sequel they will be used in the analysis of the consistency.
Bearing in mind the above, we state the following theorem about a decomposition of the solution $y$ to a layer component $s$ and a regular component $r$ and the appropriate estimates.

\begin{theorem} {\rm\cite{vulanovic1983numerical} \label{theoremDecomposition}
	} The solution $y$ to problem \eqref{problem1}--\eqref{problem2} can be represented in the following way:
	\begin{equation*}
		y=r+s,
	\end{equation*}
	where for $j=0,1,...,k+2$ and $x\in[0,1]$ we have  that
	\begin{equation}
		\left|r^{(j)}(x)\right|\leq  C,
		\label{regularna}
	\end{equation}
	and
	\begin{equation}
		\left|s^{(j)}(x)\right|\leq  C \varepsilon^{-j}\left(e^{-\frac{x}{\varepsilon}\sqrt{m}}+e^{-\frac{1-x}{\varepsilon}\sqrt{m}}\right).
		\label{slojna}
	\end{equation}
	\label{teorema1}
\end{theorem}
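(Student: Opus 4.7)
The plan is to obtain the decomposition via a classical matched asymptotic expansion: the regular part $r$ is built as an outer expansion in powers of $\varepsilon^{2}$, the layer part $s$ is assembled from exponentially decaying boundary--layer corrections at the two endpoints, and the argument is closed by applying a maximum--principle estimate to the remainder.

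First I would set $r_{0}(x)$ to be the unique, smooth solution of the reduced algebraic equation $f(x,r_{0}(x))=0$; the assumption $f_{y}\geq m>0$ together with $f\in C^{k}([0,1]\times\mathbb{R})$ yields, via the implicit function theorem, that $r_{0}$ is as regular as $f$ allows and that $|r_{0}^{(j)}|\leq C$ independently of $\varepsilon$. Higher--order outer terms are then obtained from the ansatz $r=\sum_{i=0}^{N}\varepsilon^{2i}r_{i}$: inserting it into $\varepsilon^{2}y''-f(x,y)=0$, Taylor--expanding $f$ about $r_{0}$ and matching powers of $\varepsilon^{2}$ produces, for each $i\geq 1$, an algebraic (not differential) equation of the form $f_{y}(x,r_{0})\,r_{i}=r_{i-1}''+P_{i}(x,r_{0},\ldots,r_{i-1})$ with $P_{i}$ a smooth polynomial expression. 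Because $f_{y}\geq m$, each such equation is uniquely solvable and the $r_{i}$ inherit $\varepsilon$--independent bounds on their derivatives; choosing $N$ sufficiently large makes the residual $\varepsilon^{2}r''-f(x,r)$ of size $O(\varepsilon^{2N+2})$ uniformly in $x$.

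Next I would construct $s=s_{L}+s_{R}$, a left and a right boundary--layer correction. Near $x=0$ introduce the stretched variable $\xi=x/\varepsilon$ and seek $s_{L}$ as a series of profiles decaying in $\xi$. The leading profile $S_{0}(\xi)$ solves the linearized constant--coefficient equation $S_{0}''=f_{y}(0,r_{0}(0))\,S_{0}$ with initial data chosen to annihilate the mismatch $-r(0)$ in the boundary condition; since $f_{y}(0,r_{0}(0))\geq m$, its decay is at least $e^{-\sqrt{m}\,\xi}$, i.e.\ $e^{-x\sqrt{m}/\varepsilon}$ in the original variable, and each $x$--derivative picks up a factor $\varepsilon^{-1}$ from the chain rule. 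Higher--order profiles satisfy analogous linear problems with the same or faster decay, producing altogether the bound \eqref{slojna} for $s_{L}$; an entirely symmetric construction at $x=1$ yields $s_{R}$, whose contribution at the opposite endpoint is $O(e^{-c/\varepsilon})$ and can be absorbed.

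Finally, setting $\tilde y=r+s$, the remainder $z=y-\tilde y$ satisfies, by the mean--value theorem, a linear equation of the form $\varepsilon^{2}z''-a(x)z=\rho(x)$ with $a(x)\geq m$, $\|\rho\|_{\infty}=O(\varepsilon^{2N+2})$ and boundary values of size $O(e^{-c/\varepsilon})$. The associated operator admits a maximum principle, so a barrier of size $C\bigl(\varepsilon^{2N+2}+e^{-c/\varepsilon}\bigr)$ controls $|z|$; derivative bounds for $z$ then follow from standard stretching arguments on subintervals of length $O(\varepsilon)$, combined with bootstrapping from the differential equation itself, which raises regularity by two orders and so accounts for the range $j\le k+2$ in the statement. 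Taking $N$ sufficiently large absorbs $z$ into $r$ without deteriorating the claimed estimates. The main obstacle is the matching bookkeeping: one must ensure the successive layer corrections leave both the boundary data and the equation residual small enough for the maximum principle to close, and one must translate derivative estimates for the layer profiles from the stretched variable back to $x$ with the correct $\varepsilon^{-j}$ scaling---this is precisely the step where the exponential bound in \eqref{slojna} is earned.
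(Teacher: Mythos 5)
This theorem is not proved in the paper at all: it is quoted verbatim from Vulanovi\'c \cite{vulanovic1983numerical}, so there is no in-paper argument to compare against. Your overall architecture --- outer expansion in powers of $\varepsilon^{2}$ starting from the reduced solution $f(x,r_{0})=0$, boundary-layer correctors in the stretched variables $x/\varepsilon$ and $(1-x)/\varepsilon$, and a maximum-principle barrier for the remainder $z$ of the linearized equation $\varepsilon^{2}z''-a(x)z=\rho$ with $a\geqslant m$ --- is exactly the standard route by which such decompositions are established for this class of problems, and is in the spirit of the cited source.

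There is, however, one genuine flaw in the construction of the layer part. You take the leading profile $S_{0}$ to solve the \emph{linearized} constant-coefficient equation $S_{0}''=f_{y}(0,r_{0}(0))S_{0}$. But the boundary mismatch it must absorb is $S_{0}(0)=-r_{0}(0)$, which is $O(1)$, not small, so linearization about $r_{0}(0)$ is not justified: for genuinely nonlinear $f$ the correct leading layer equation is the nonlinear autonomous problem $S_{0}''=f\bigl(0,r_{0}(0)+S_{0}\bigr)$ (using $f(0,r_{0}(0))=0$) with $S_{0}(0)=-r_{0}(0)$ and $S_{0}(+\infty)=0$. If you insert the linearized profile instead, the residual $\varepsilon^{2}(r+s)''-f(x,r+s)$ contains the term $f\bigl(0,r_{0}(0)+S_{0}\bigr)-f(0,r_{0}(0))-f_{y}(0,r_{0}(0))S_{0}$, which is $O(|S_{0}|^{2})=O(1)$ near $\xi=0$; then $\|\rho\|_{\infty}$ is not $O(\varepsilon^{2N+2})$ and the maximum-principle closure fails. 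The fix is standard but requires its own lemma: existence of the decaying solution of the nonlinear autonomous problem (a phase-plane or energy argument exploiting $f_{y}\geqslant m$, which makes the origin a saddle) together with the comparison bound $|S_{0}^{(j)}(\xi)|\leqslant Ce^{-\sqrt{m}\,\xi}$ obtained by writing $f(0,r_{0}(0)+S_{0})=f_{y}(0,\eta)S_{0}$ with $f_{y}(0,\eta)\geqslant m$. A secondary, smaller point: with $f\in C^{k}$ the implicit function theorem gives $r_{0}\in C^{k}$ only, so the claimed bounds for $j$ up to $k+2$ do not come from the outer expansion alone; they require the bootstrap through the differential equation that you mention only in passing for the remainder, and the number of usable correction terms $r_{i}$ is limited by the loss of two derivatives at each order, so ``choosing $N$ sufficiently large'' must be reconciled with the finite smoothness of $f$.
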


\subsection{Layer--adapted mesh}\label{mreze}
It's a well--known fact that the exact solution to problems like \eqref{problem1}--\eqref{problem2} changes rapidly near the end points $x=0$ and $x=1.$ Many meshes have been constructed for the numerical solving problems that have a layer or layers of an exponential type. In the present paper we shall use three different meshes. We will get these meshes  $0=x_0<x_1<\ldots<x_N=1,$ by using appropriate generating functions, i.e. $x_i=\psi(i/N).$ The generating function are constructed as follows. 

Let $N+1$ be the number of mesh points, $q\in(0,1/2)$ mesh parameter. Define the Shishkin mesh transition point by
\begin{equation}\label{shihskinTransitionPoint}
	\lambda:=\min\left\{\frac{2\varepsilon\ln N}{\sqrt{m}},\frac{1}{4}\right\} .
\end{equation}
The first mesh we will use in the sequel is a modified Shishkin mesh proposed by Vulanovi\'c \cite{vulanovic2001higher}. The generating function for this mesh is 
\begin{equation}\label{meshShishkin2}
	\psi(t)=\begin{cases}
		4\lambda t,\quad  t\in[0,1/4],\\
		p(t-1/4)^3+4\lambda t, \quad t\in[1/4,1/2],\\
		1-\psi(1-t),\quad t\in[1/2,1],
	\end{cases}  
\end{equation}
where $p$ is chosen so that $\psi(1/2)=1/2,$ i.e. $p=32 (1-4\lambda).$ Note that $\psi\in C^{1}[0,1]$ with $\Vert\psi'\Vert_{\infty}\leqslant C,$ $\Vert\psi''\Vert_{\infty}\leqslant C.$ Therefore the mesh size $h_i=x_{i+1}-x_i,\,i=0,\ldots N-1$ satisfy (see \cite{linss2012approximation})
\begin{eqnarray}
	h_i=\int_{i/N}^{(i+1)N}{\psi'(t)\dif t}\leqslant CN^{-1},\quad |h_{i+1}-h_i|=\left|\int_{(i-1)/N}^{i/N}{\int^{t+1/N}_{t}{\psi''(s) \dif s}}\right|\leqslant CN^{-2}.
\end{eqnarray}
The second mesh is the Shishkin mesh \cite{shishkin1988grid}. The generating function for this mesh is 
\begin{equation}\label{meshShishkin1}
\psi(t)=\begin{cases}
4\lambda  t,\quad t\in[0,1/4] \\
\lambda+2(1-2\lambda)(t-1/4),\quad t\in[1/2,1/4],\\
1-\psi(1-t),\quad t\in[1/2,1].
\end{cases}
\end{equation}
The third mesh is the modified Bakhvalov mesh also proposed by Vulanovi\'c \cite{vulanovic1983numerical}. The generating function for this mesh is 

\begin{equation}\label{meshVulanovic1}
\psi(t)=\begin{cases}
\mu(t):=\frac{a\varepsilon t}{q-t},\quad t\in[0,\alpha],\\
\mu(\alpha)+\mu'(\alpha)(t-\alpha),\quad t\in[\alpha ,1/2],\\
1-\psi(1-t),\quad t\in[1/2,1],
\end{cases}
\end{equation}
where $a$ and $q$ are constants, independent of $\varepsilon,$ such that $q\in(0,1/2),\:a\in(0,q/\varepsilon),$ and additionally $a\sqrt{m}\geqslant 2.$ The parameter 
$\alpha$ is the abscissa of the contact point of the tangent line from $(1/2,1/2)$ to $\mu (t),$ and its value is 
\[\alpha =\frac{q-\sqrt{aq\varepsilon(1-2q+2a\varepsilon)}}{1+2a\varepsilon }.\]

The fourth mesh proposed by Liseikin \cite{liseikin2018grid, liseikin2019compact}, and we will use 	its modification from \cite{liseikin2020numerical}. The generating function for this mesh is 

\begin{equation}\label{meshLiseikin}
\psi( t,\varepsilon,a,k)=
\left\{
\begin{array}{ll}
\displaystyle c_1\varepsilon^k ((1-d t) ^{-1/a}-1)\;, & 0\leqslant t \leqslant 1/4 \;,\\[4mm]
\displaystyle c_1\Bigl[\varepsilon^{kan/(1+na)}-\varepsilon^k+d\frac{1}{a}\varepsilon^{ka(n-1)/(1+na)}(t-1/4 )+\\
\frac{1}{2}d^2\frac{1}{a}\Bigl(\frac{1}{a}+1\Bigr)\varepsilon^{ka(n-2)/(1+na)}(t -1/4 )^2+c_0(t -1/4 )^{3}\Bigr]\;, & 1/4  \leqslant  t \leqslant 1/2\;,\\
1-\psi(1-t ,\varepsilon,a,k)\;,& 1/2\leqslant t \leqslant 1\;,
\end{array}
\right .
\end{equation}
where $d=(1-\varepsilon^{ka/(1+na)})/(1/4),$ $a$ is a positive constant subject to $a\geq m_1 > 0$, and $a=1,$  $c_0>0$, $n=2,$ $k=1,$ $c_0=0,$   and
$
\tfrac{1}{c_1}=2\left[\varepsilon^{kan/(1+na)}-\varepsilon^k+\tfrac{d}{4a} \varepsilon^{ka(n-1)/(1+na)}\right.
\left.+ \tfrac{d^2}{2}\tfrac{1}{a}\Bigl(\frac{1}{a}+1\Bigr)\varepsilon^{ka(n-2)/(1+na)}(1/4 )^2+c_0(1/4)^{3}\right]
$
is chosen here.

\section{Difference scheme}
We will consider an arbitrary mesh with mesh points \[0=x_0<x_1<\ldots<x_N=1,\] 
and let it be $h_i=x_{i+1}-x_i,\:i=0,1,\ldots,N-1.$ 
In constructing a new difference scheme for the problem \eqref{problem1}--\eqref{problem2} we use the following scheme from Boglaev \cite{boglaev1984approximate}
\begin{multline}\label{schema0}
\frac{\beta}{\sinh(\beta h_{i-1})}y_{i-1}-\left(\frac{\beta}{\tanh(\beta h_{i-1})}+\frac{\beta}{\tanh(\beta h_i)}\right)y_i +\frac{\beta}{\sinh(\beta h_i)}y_{i+1}\\
    =\frac{1}{\varepsilon^2}\left[\int_{x_{i-1}}^{x_{i}}{u^{II}_{i-1}\psi(s,y)\dif s} +\int_{x_{i}}^{x_{i+1}}{u^{I}_{i}\psi(s,y)\dif s}\right],\:
    i=1,2,\ldots,N-1,\:y_0=y_N=0,
\end{multline}
where 
\begin{equation}
\psi(s,y)=f(s,y)-\gamma y,\:\beta=\frac{\sqrt{\gamma}}{\varepsilon}.
\end{equation}
We can't calculate the integrals in \eqref{schema0}  because we don't know the exact solution $y$ to the problem \eqref{problem1}--\eqref{problem2}. The next step is to approximate  the function $\psi$ by a constant value. Approximations of the function $\psi$ are
\begin{align}
 \psi^{-}_i&=(1-t)\psi(x_{i-1},y(x_{i-1}))+t\psi(x_i,y(x_i)),\:x\in[x_{i-1},x_i],\label{approximation1}\\
 \psi^{+}_i&=t\psi(x_{i},y(x_{i}))+(1-t)\psi(x_{i+1},y(x_{i+1})),\:x\in[x_{i},x_{i+1}],\:t\in[0,1].\label{approximation2}
\end{align}
By using the approximations \eqref{approximation1}, \eqref{approximation2} into \eqref{schema0}, after calculating the integrals and some computing, and taking in account that
\begin{align*}
\int_{x_{i-1}}^{x_{i}}{u^{II}_{i-1}\dif s}=& \frac{\cosh(\beta h_{i-1})-1}{\beta\sinh(\beta h_{i-1})},     \\
\int_{x_{i}}^{x_{i+1}}{u^{I}_{i}\dif s}=&  \frac{\cosh(\beta h_{i})-1}{\beta\sinh(\beta h_{i})},
\end{align*}
we get the difference scheme
\begin{multline}
   \frac{(1-t)\cosh(\beta h_{i-1})+t}{\sinh(\beta h_{i-1})}(\overline{y}_{i-1}-\overline{y}_i)
    - \frac{(1-t)\cosh(\beta h_{i})+t}{\sinh(\beta h_{i})}(\overline{y}_{i}-\overline{y}_{i+1})\\
    -\frac{(1-t)f_{i-1}+tf_i}{\gamma}\cdot\frac{\cosh(\beta h_{i-1})-1}{\sinh(\beta h_{i-1})}
      -\frac{tf_i+(1-t)f_{i+1}}{\gamma}\cdot\frac{\cosh(\beta h_{i})-1}{\sinh(\beta h_{i})}=0,
   \label{shema1} 
\end{multline}
where $f_{k}=f(x_{k},\overline{y}_{k}),\,k\in\{i-1,i,i+1\},$ and $t\in[0,1].$

The previous form of the difference scheme can be written in the following form
\begin{multline*}
   \frac{(1-t)\cosh(\beta h_{i-1})+1-t+2t-1}{\sinh(\beta h_{i-1})}(\overline{y}_{i-1}-\overline{y}_i)
    - \frac{(1-t)\cosh(\beta h_{i})+1-t+2t-1}{\sinh(\beta h_{i})}(\overline{y}_{i}-\overline{y}_{i+1})\\
    -\frac{(1-t)f_{i-1}+(1-t+2t-1)f_i}{\gamma}\cdot\frac{\cosh(\beta h_{i-1})-1}{\sinh(\beta h_{i-1})}
      -\frac{(1-t+2t-1)f_i+(1-t)f_{i+1}}{\gamma}\cdot\frac{\cosh(\beta h_{i})-1}{\sinh(\beta h_{i})}=0,
\end{multline*}
and finally

\begin{align}
  &(1-t)\left[\frac{\cosh(\beta h_{i-1})+1}{\sinh(\beta h_{i-1})}(\overline{y}_{i-1}-\overline{y}_{i})
             -\frac{\cosh(\beta h_{i})+1}{\sinh(\beta h_{i})}(\overline{y}_{i}-\overline{y}_{i+1})\right.\nonumber\\
  &\hspace{4.5cm}            -\left.\frac{f_{i-1}+f_{i}}{\gamma}\cdot\frac{\cosh(\beta h_{i-1})-1}{\sinh(\beta h_{i-1})}
              -\frac{f_{i}+f_{i+1}}{\gamma}\cdot\frac{\cosh(\beta h_{i-1})-1}{\sinh(\beta h_{i-1})}\right]\nonumber\\
  &+(2t-1)\left[ \frac{1}{\sinh(\beta h_{i-1})}(\overline{y}_{i-1}-\overline{y}_{i})
                -\frac{1}{\sinh(\beta h_{i})}(\overline{y}_{i}-\overline{y}_{i+1})\right.\nonumber \\           
  &\hspace{4.5cm} -\left.\frac{f_{i}}{\gamma}\cdot\frac{\cosh(\beta h_{i-1})-1}{\sinh(\beta h_{i-1})}
              -\frac{f_{i}}{\gamma}\cdot\frac{\cosh(\beta h_{i-1})-1}{\sinh(\beta h_{i-1})}\right]=0,\:i=1,\ldots, N-1. \label{shema2}                         
\end{align}

\section{Stability}
The difference scheme \eqref{shema2} generates a nonlinear system. A goal of this section is to show that this  system has a unique solution. We are going to construct a discrete operator $T,$ and show that the discrete operator $T$ is inverse-monotone as well, which implies that our numerical method is stable, and the numerical solution exist and it is a unique. 

Let us set the discrete operator 
\begin{equation}\label{operator1}
Tu=(Tu_0,\,Tu_1,\,\ldots,Tu_N)^T,
\end{equation}
where 
\begin{align}
Tu_0=&-u_0\nonumber\\
Tu_i=& \frac{\gamma}{\frac{\cosh(\beta h_{i-1})-1}{\sinh(\beta h_{i-1})}+\frac{\cosh(\beta h_{i})-1}{\sinh(\beta h_{i})}} \nonumber\\
      &\left\{ (1-t)\left[\frac{\cosh(\beta h_{i-1})+1}{\sinh(\beta h_{i-1})}(\overline{y}_{i-1}-\overline{y}_{i})
     -\frac{\cosh(\beta h_{i})+1}{\sinh(\beta h_{i})}(\overline{y}_{i}-\overline{y}_{i+1})\right.\right.\nonumber\\
     &\hspace{3.5cm}            -\left.\frac{f_{i-1}+f_{i}}{\gamma}\cdot\frac{\cosh(\beta h_{i-1})-1}{\sinh(\beta h_{i-1})}
     -\frac{f_{i}+f_{i+1}}{\gamma}\cdot\frac{\cosh(\beta h_{i-1})-1}{\sinh(\beta h_{i-1})}\right]\nonumber\\
     &+(2t-1)\left[ \frac{1}{\sinh(\beta h_{i-1})}(\overline{y}_{i-1}-\overline{y}_{i})
     -\frac{1}{\sinh(\beta h_{i})}(\overline{y}_{i}-\overline{y}_{i+1})\right.\nonumber \\           
     &\hspace{3.5cm} -\left.\left.\frac{f_{i}}{\gamma}\cdot\frac{\cosh(\beta h_{i-1})-1}{\sinh(\beta h_{i-1})}
     -\frac{f_{i}}{\gamma}\cdot\frac{\cosh(\beta h_{i-1})-1}{\sinh(\beta h_{i-1})}\right]\right\}=0,\:i=1,\ldots, N-1,\label{operator2} \\
Tu_N=&-u_N \nonumber    
\end{align}
Obviously, it is hold 
\begin{equation}\label{operator3}
T\overline{y}=0,
\end{equation}
where $\overline{y}=(\overline{y}_0,\,\overline{y}_1,\ldots,\overline{y}_N)^T$ the numerical solution of the problem \eqref{problem1}--\eqref{problem2}, obtained by using the difference scheme \eqref{shema2}. Now, we can state and prove the theorem of stability. 

\begin{theorem}
The discrete problem \eqref{operator1}--\eqref{operator3} has a unique solution $\overline{y}$ for $\gamma \geqslant f_y.$  Moreover, for every $v,\;w\in \mathbb{R}^{N+1}$ we have the following stability inequality
\begin{equation}
\Vert v-w\Vert\leqslant C\Vert Tv-Tw\Vert.\label{thStab1}
\end{equation}
\end{theorem}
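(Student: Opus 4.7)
The plan is to reduce the nonlinear stability estimate to a linear M-matrix bound via the mean value theorem, and then apply the classical barrier-function technique that is standard for Il'in/Boglaev-type schemes on this class of reaction-diffusion problems. The auxiliary results I use are only \eqref{problem2} (i.e.\ $f_y \ge m > 0$) and the hypothesis $\gamma \ge f_y$.

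First I would fix $v,w\in\mathbb{R}^{N+1}$, set $z=v-w$, and apply the mean value theorem to each occurrence of $f(x_k,\cdot)$ in $(Tv)_i-(Tw)_i$. This produces coefficients $g_k=f_y(x_k,\xi_k)$ with $m\le g_k\le \gamma$, and writes $Tv-Tw=Az$, where $A\in\mathbb{R}^{(N+1)\times(N+1)}$ is tridiagonal with $A_{00}=A_{NN}=-1$ and interior rows inherited from the algebraic form in \eqref{operator2}. The off-diagonal entries $A_{i,i-1}$ and $A_{i,i+1}$ collect the terms proportional to $\tfrac{\cosh(\beta h)\pm 1}{\sinh(\beta h)}$ and $\tfrac{1}{\sinh(\beta h)}$; using $g_{i\pm 1}\le\gamma$ together with the elementary identity $\cosh(\beta h)-1\le \cosh(\beta h)+1$, one checks that the contribution $-\tfrac{g_{i\pm 1}}{\gamma}\cdot\tfrac{\cosh(\beta h)-1}{\sinh(\beta h)}$ does not cancel the larger positive term, so that $A_{i,i\pm 1}>0$ (with the common positive prefactor $\gamma/D_i$, where $D_i=\tfrac{\cosh(\beta h_{i-1})-1}{\sinh(\beta h_{i-1})}+\tfrac{\cosh(\beta h_{i})-1}{\sinh(\beta h_{i})}$). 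Since boundary rows are trivial, this gives $-A$ the off-diagonal-nonpositive structure required for an M-matrix.

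Next I would establish the sign of the row sums by testing against the constant barrier $\mathbf{w}=(1,1,\dots,1)^T$. Direct substitution into \eqref{operator2} shows that all the three-point linear finite-difference contributions cancel in pairs, and the only surviving contribution on row $i$ comes from the $f$-terms, giving $(A\mathbf{w})_i=-\tfrac{g_{i-1}+2g_i+g_{i+1}}{\text{pos.\ factor}}\le -c<0$ for some constant $c>0$ independent of $N$ and $\varepsilon$, thanks to $g_k\ge m$. Combined with the sign structure of Step 1, this yields $A^{-1}\le 0$ entrywise and the barrier bound $\|A^{-1}\|_\infty\le 1/c$. Consequently
\begin{equation*}
\|v-w\|_\infty=\|A^{-1}Az\|_\infty\le \tfrac{1}{c}\|Az\|_\infty=\tfrac{1}{c}\|Tv-Tw\|_\infty,
\end{equation*}
which is \eqref{thStab1} with $C=1/c$. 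Uniqueness of a solution to $T\overline y=0$ follows at once. For existence one can either run a homotopy in the nonlinearity (the above a priori bound is uniform along the homotopy, so the solution can be continued to $\tau=1$), or, equivalently, use the fact that an inverse-monotone continuous map on $\mathbb{R}^{N+1}$ with a uniform coercivity estimate is surjective.

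The hard part will be Step 1 — verifying the positivity of the off-diagonal entries of $A$ for the full admissible range of the parameter $t\in[0,1]$. When $t<1/2$ the coefficient $(2t-1)$ in \eqref{operator2} is negative, which forces a careful grouping of the $(1-t)$ and $(2t-1)$ contributions before bounding them by $\gamma$; this is precisely where the assumption $\gamma\ge f_y$ is consumed. Once the correct algebraic rearrangement is in hand (so that every entry of $A$ can be written as a single fraction of manifestly signed quantities), the rest of the argument is routine.
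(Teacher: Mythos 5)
Your proposal is correct and follows essentially the same route as the paper: linearize by the mean value theorem, check that the resulting tridiagonal matrix is an $M$-matrix (off-diagonal positivity is exactly where $\gamma\geqslant f_y$ is consumed, after regrouping into the form \eqref{shema1} so that each off-diagonal entry equals $\frac{(1-t)\cosh(\beta h)+t}{\sinh(\beta h)}-\frac{g}{\gamma}\cdot\frac{(1-t)(\cosh(\beta h)-1)}{\sinh(\beta h)}\geqslant\frac{1}{\sinh(\beta h)}>0$), and bound the inverse in the maximum norm by $1/m$, which the paper states as the diagonal dominance $|h_{i,i}|-|h_{i,i-1}|-|h_{i,i+1}|\geqslant m$ rather than via your constant barrier vector. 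The only differences are cosmetic: the surviving row sum is the convex-combination expression $-\left[\left((1-t)g_{i-1}+tg_i\right)\sigma_{i-1}+\left(tg_i+(1-t)g_{i+1}\right)\sigma_i\right]/(\sigma_{i-1}+\sigma_i)\leqslant -m$ with $\sigma_j=\frac{\cosh(\beta h_j)-1}{\sinh(\beta h_j)}$, not $-(g_{i-1}+2g_i+g_{i+1})$ over a positive factor (though your conclusion, a bound $\leqslant -c$ uniform in $N$ and $\varepsilon$, is right with $c=m$), and the paper obtains existence from Hadamard's global homeomorphism theorem where you propose a homotopy, which is the same mechanism.
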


\begin{proof}
	We use a well known technique from \cite{vulanovic1993} to prove the first statement of the theorem. The proof of existence and uniqueness of the solution of the discrete problem $T y=0$ is based on the proof of the relation: $\Vert T y \Vert\leqslant C,$ where $T'$ is the Fr\' echet derivative of $T.$ The Fr\' echet derivative $H:=T'( y )$ is a tridiagonal matrix. Let $H=[h_{ij}].$ The non-zero elements of this tridiagonal matrix are 
\begin{align}
   h_{1,1}=&h_{N+1,N+1}=-1<0,\nonumber \\
   h_{i,i-1}=&\frac{\gamma}{\frac{\cosh(\beta h_{i-1})-1}{\sinh(\beta h_{i-1})}+\frac{\cosh(\beta h_{i})-1}{\sinh(\beta h_{i})}} 
              \left[\frac{(1-t)\cosh(\beta h_{i-1})+t}{\sinh(\beta h_{i-1})}-\frac{\frac{\partial f}{\partial y_{i-1}}}{\gamma}
                \cdot\frac{(1-t)(\cosh(\beta h_{i-1})-1)}{\sinh(\beta h_{i-1})}\right],\nonumber\\
   h_{i,i+1}=& \frac{\gamma}{\frac{\cosh(\beta h_{i-1})-1}{\sinh(\beta h_{i-1})}+\frac{\cosh(\beta h_{i})-1}{\sinh(\beta h_{i})}} 
              \left[\frac{(1-t)\cosh(\beta h_{i})+t}{\sinh(\beta h_{i})}-\frac{\frac{\partial f}{\partial y_{i+1}}}{\gamma}
                \cdot\frac{(1-t)(\cosh(\beta h_{i})-1)}{\sinh(\beta h_{i})}\right],\nonumber\\ 
   h_{i,i}=& \frac{-\gamma}{\frac{\cosh(\beta h_{i-1})-1}{\sinh(\beta h_{i-1})}+\frac{\cosh(\beta h_{i})-1}{\sinh(\beta h_{i})}} 
            \left[\frac{(1-t)\cosh(\beta h_{i-1})+t}{\sinh(\beta h_{i-1})}+\frac{(1-t)\cosh(\beta h_{i})+t}{\sinh(\beta h_{i})} \right.\nonumber\\
         &\hspace{4cm}\left. +t\frac{\frac{\partial f}{\partial y_i}}{\gamma}\cdot \frac{\cosh(\beta h_{i-1})-1}{\sinh(\beta h_{i-1})} 
                             +t\frac{\frac{\partial f}{\partial y_i}}{\gamma}\cdot \frac{\cosh(\beta h_{i})-1}{\sinh(\beta h_{i})}     \right], \:i=2,\ldots,N. \label{stabilnost1}                          
\end{align}
From \eqref{stabilnost1}, it's obvious that
\[h_{i,i-1}>0,\:h_{i,i+1}>0,\:h_{i,i}<0,\]
and
\[\left|h_{i,i}\right|-\left|h_{i,i-1}\right|-\left| h_{i,i+1}\right|\geqslant m,\]
so we can conclude that $H$ is an $M$--matrix, and finally we obtain 
\begin{equation}
\Vert H^{-1}\Vert\leqslant\frac{1}{m}.
\label{diskretni4}
\end{equation}  
Using Hadamard's theorem (\cite[p 137]{ortega2000})  we get that $T$ homeomorphism. Since clearly $\mathbb{R}^{N+1}$ is non--empty and $0$ is the only image of the mapping $T,$ we have that \eqref{operator3}  has a unique solution.

The proof of second statement of the Theorem \ref{thStab1} is based on a part of the proof of Theorem 3 from \cite{herceg1990}. We have that $Tv-Tw=\left(T'\xi\right)^{-1}(v-w)$ for some $\xi=(\xi_0,\xi_1,\ldots,\xi_N)^T\in\mathbb{R}^{N+1}.$ Therefore $v-w=\left( T'\xi\right)^{-1}(Tv-Tw)$ and finally due to inequality \eqref{diskretni4} we have that 
\[\Vert v-w\Vert=\Vert\left( T'\xi\right)^{-1}(Tv-Tw)\Vert\leqslant\frac{1}{m}\Vert Tv-Tw\Vert.\] 
\end{proof}

\section{Uniform convergence}

The difference scheme \eqref{shema1} we can write in the following form 

\begin{multline}
   (1-t)\left[\frac{\cosh(\beta h_{i-1})-1}{\sinh(\beta h_{i-1})}(\overline{y}_{i-1}-\overline{y}_{i})
               -\frac{\cosh(\beta h_{i})-1}{\sinh(\beta h_{i})}(\overline{y}_{i}-\overline{y}_{i+1})  \right]
               +\frac{\overline{y}_{i-1}-\overline{y}_i}{\sinh(\beta h_{i-1})}
                -\frac{\overline{y}_{i}-\overline{y}_{i+1}}{\sinh(\beta h_i)} \\
   -\left.\frac{(1-t)f_{i-1}+tf_i}{\gamma}\cdot\frac{\cosh(\beta h_{i-1})-1}{\sinh(\beta h_{i-1})}
      -\frac{tf_i+(1-t)f_{i+1}}{\gamma}\cdot\frac{\cosh(\beta h_{i})-1}{\sinh(\beta h_{i})}  \right]   =0,\:i=1,\ldots,N-1.
 \label{schema3} 
\end{multline}

In order to prove the Theorem of convergence, we need three estimates given in the next lemmas. 

\begin{lemma}{\rm\cite{samir2015uniformly}}\label{lema1}
  Assume that $\varepsilon \leqslant \frac{C}{N}.$ In the part of the modified Shishkin mesh from Section \ref{mreze} when $x_i,\,x_{i\pm 1}\in[x_{N/4-1},\lambda]\cup[\lambda,1/2],$ we have the following estimate
\begin{equation*}
\left| \frac{\frac{\cosh(\beta h_{i-1})-1}{\sinh(\beta h_{i-1})}(y(x_{i-1})-y(x_i))
              -\frac{\cosh(\beta h_{i})-1}{\sinh(\beta h_{i})}(y(x_{i})-y(x_{i+1})) }
              {\frac{\cosh(\beta h_{i-1})-1}{\sinh(\beta h_{i-1})}+\frac{\cosh(\beta h_{i})-1}{\sinh(\beta h_{i})}}\right|\leqslant\frac{C}{N^2},\:i=N/4,\ldots,N/2-1.
\end{equation*}
\end{lemma}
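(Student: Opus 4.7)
The ratio to bound takes the form $[A_{i-1}(y(x_{i-1})-y(x_i)) - A_i(y(x_i)-y(x_{i+1}))]/(A_{i-1}+A_i)$, where I set $A_j := (\cosh(\beta h_j)-1)/\sinh(\beta h_j) = \tanh(\beta h_j/2)$ using the half-angle identity. Since the numerator is linear in $y$ and the denominator is strictly positive, my plan is to apply the Shishkin-type decomposition $y = r + s$ from Theorem~\ref{teorema1} and bound the $r$- and $s$-contributions to the quotient separately, each by $CN^{-2}$.

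For the layer component $s$, the hypothesis $\varepsilon\leq C/N$ forces $\lambda = 2\varepsilon\ln N/\sqrt m$ in \eqref{shihskinTransitionPoint}. Every stencil node satisfies $x_j \geq x_{N/4-1} = \lambda(1-4/N)$, so $e^{-x_j\sqrt m/\varepsilon} \leq N^{-2} e^{8\ln N/N} \leq CN^{-2}$, and the right-layer exponential in \eqref{slojna} is no larger. Hence $|s(x_j)|\leq CN^{-2}$ uniformly for $j\in\{i-1,i,i+1\}$. The triangle inequality then bounds the $s$-part of the numerator by $CN^{-2}(A_{i-1}+A_i)$, and dividing by the denominator gives the required $O(N^{-2})$.

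For the regular part $r$, I Taylor-expand about $x_i$ to third order using $|r^{(j)}|\leq C$ from \eqref{regularna}. The $r$-part of the numerator rewrites as
\[
-(A_{i-1}h_{i-1}-A_ih_i)\,r'(x_i) + \tfrac12\bigl(A_{i-1}h_{i-1}^2 + A_ih_i^2\bigr)r''(x_i) + R,\qquad |R|\leq C(A_{i-1}+A_i)h_{\max}^3.
\]
After dividing by $A_{i-1}+A_i$, the $r''$-coefficient is a convex combination of $h_{i-1}^2/2$ and $h_i^2/2$, hence bounded by $h_{\max}^2/2 \leq CN^{-2}$ via the estimate $\|\psi'\|_\infty\leq C$ recorded with \eqref{meshShishkin2}; the cubic remainder contributes $O(h_{\max}^3) = O(N^{-3})$.

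The main obstacle is the first-order coefficient $K_i:=(A_{i-1}h_{i-1}-A_ih_i)/(A_{i-1}+A_i)$. I plan to set $\phi(h):= h\tanh(\beta h/2)$ and invoke the mean value theorem to write $\phi(h_{i-1})-\phi(h_i) = \phi'(\tilde h)(h_{i-1}-h_i)$ for some $\tilde h$ between $h_{i-1}$ and $h_i$, with $\phi'(h) = \tanh(\beta h/2) + (\beta h/2)/\cosh^2(\beta h/2)$. The crux is to show that $\phi'(\tilde h)/(A_{i-1}+A_i)$ is bounded independently of $\beta$, which I verify by a case analysis on $\beta h_{\max}$: when $\beta h_{\max}\leq 2$, both numerator and denominator scale linearly in $\beta$ and the quotient reduces to a bounded multiple of $\tilde h/(h_{i-1}+h_i)\leq 1$; when $\beta h_{\max}\geq 2$, the denominator is bounded below by $\tanh(1)>0$ while $\phi'$ itself is globally bounded. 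Combined with the mesh regularity $|h_{i-1}-h_i|\leq CN^{-2}$ (from $\|\psi''\|_\infty\leq C$), this yields $|K_i|\leq CN^{-2}$. Summing the three regular-part contributions with the layer-part bound finishes the argument.
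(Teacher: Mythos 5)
The paper itself gives no proof of Lemma~\ref{lema1}: it is imported by citation from \cite{samir2015uniformly}, so there is no in-text argument to compare against. Your proof is correct and self-contained, and it follows the same general strategy that the paper uses for the companion Lemma~\ref{lema3} (and that the cited source uses): decompose $y=r+s$ via Theorem~\ref{teorema1}, discard the layer part because $x_j\geqslant x_{N/4-1}=\lambda(1-4/N)$ with $\lambda=2\varepsilon\ln N/\sqrt m$ gives $e^{-x_j\sqrt m/\varepsilon}\leqslant N^{-2}e^{8\ln N/N}\leqslant CN^{-2}$ (and the numerator is linear in $y$ with positive weights summing to the denominator), then Taylor-expand the regular part using \eqref{regularna}. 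The only genuinely delicate point is the first-order coefficient $K_i=(A_{i-1}h_{i-1}-A_ih_i)/(A_{i-1}+A_i)$, since $\beta h_i$ need not be small on $[\lambda,1/2]$; your treatment via $\phi(h)=h\tanh(\beta h/2)$, the mean value theorem, and the two-case bound on $\phi'(\tilde h)/(A_{i-1}+A_i)$ (linear scaling in $\beta$ when $\beta h_{\max}\leqslant 2$, versus the uniform lower bound $\tanh 1$ on the denominator and the global bound on $\phi'$ otherwise) is sound, and combining it with $|h_{i-1}-h_i|\leqslant CN^{-2}$ from $\Vert\psi''\Vert_\infty\leqslant C$ for the mesh \eqref{meshShishkin2} closes the argument. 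The remaining pieces (convex combination bounding the $r''$-coefficient by $h_{\max}^2/2\leqslant CN^{-2}$, cubic remainder of order $N^{-3}$) are routine and correctly handled. I see no gap.
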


\begin{lemma}{\rm\cite{samir2015uniformly}}\label{lema2}
  Assume that $\varepsilon \leqslant \frac{C}{N}.$ In the part of the modified Shishkin mesh from Section \ref{mreze} when $x_i,\,x_{i\pm 1}\in[x_{N/4-1},\lambda]\cup[\lambda,1/2],$ we have the following estimate
\begin{equation*}
\left| \frac{\frac{y(x_{i-1})-y(x_i)}{\sinh(\beta h_{i-1})}
              -\frac{y(x_{i})-y(x_{i+1})}{\sinh(\beta h_{i})} }
              {\frac{\cosh(\beta h_{i-1})-1}{\sinh(\beta h_{i-1})}+\frac{\cosh(\beta h_{i})-1}{\sinh(\beta h_{i})}}\right|\leqslant\frac{C}{N^2},\:i=N/4,\ldots,N/2-1.
\end{equation*}
\end{lemma}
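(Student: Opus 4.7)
The approach is the Shishkin-type decomposition $y = r + s$ from Theorem \ref{theoremDecomposition}, applied to the numerator and combined with a case split on whether $\beta h$ is small (fine-mesh portion near the transition) or $\beta h \gtrsim 1$ (bulk of the coarse segment). Simplifying the denominator via the identity $(\cosh u - 1)/\sinh u = \tanh(u/2)$, write
\[
\Phi_i = \frac{\mathcal{N}_i}{\mathcal{D}_i},\qquad \mathcal{D}_i = \tanh(\beta h_{i-1}/2) + \tanh(\beta h_i/2),
\]
with $\mathcal{N}_i$ the displayed numerator. Throughout, I would use $\beta = \sqrt{\gamma}/\varepsilon$, the mesh regularity $|h_i - h_{i-1}| \le CN^{-2}$ from $\psi \in C^2$ in \eqref{meshShishkin2} (in fact $O(N^{-3})$ at the transition $i=N/4$ by direct calculation with \eqref{meshShishkin2}), and the hypothesis $\varepsilon \le C/N$.

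For the regular component, Taylor expansion of $r$ around $x_i$ together with $|r^{(j)}| \le C$ gives
\[
\mathcal{N}_i^r = -r'(x_i)\bigl[g(h_{i-1}) - g(h_i)\bigr] + O\!\left(\frac{h_{i-1}^2}{\sinh(\beta h_{i-1})} + \frac{h_i^2}{\sinh(\beta h_i)}\right),\qquad g(u) := \frac{u}{\sinh(\beta u)}.
\]
The cancellation in the first bracket is accessed by the mean value theorem, for which the crucial refined bound is $|g'(u)| \le C \min(\beta u, 1)$, obtained from the expansion $g(u) = (1/\beta)(1 - (\beta u)^2/6 + \cdots)$ near $0$ and the exponential decay of $g'$ at large argument. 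Combined with $|h_i - h_{i-1}| \le CN^{-2}$, this yields $\Phi_i^r = O(N^{-2})$ in both regimes: when $\beta h$ is small, the factor $\beta h$ from $g'$ cancels the $\beta h$ in $\mathcal{D}_i$, while for $\beta h \gtrsim 1$ the denominator $\mathcal{D}_i$ is bounded below by a positive constant. The Taylor remainder is treated by the same two-regime bookkeeping.

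For the layer component, exploit that $x_{N/4-1} = \lambda(1-4/N)$ and $\lambda\sqrt{m}/\varepsilon = 2\ln N$ give $e^{-x_j\sqrt{m}/\varepsilon} \le CN^{-2}$ at all three nodes $j \in \{i-1,i,i+1\}$, while the opposite-boundary exponential is $O(N^{-2})$ under $\varepsilon \le C/N$. Integrating $|s'(t)| \le C\varepsilon^{-1}e^{-t\sqrt{m}/\varepsilon}$ gives $|s(x_{i-1}) - s(x_i)| \le CN^{-2}$; in the coarse portion where $\beta h \gtrsim 1$, both $\sinh(\beta h)$ and $\mathcal{D}_i$ are $\ge c > 0$, so $\Phi_i^s = O(N^{-2})$ is immediate. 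In the small-$\beta h$ portion this first-difference bound is too crude; instead I would Taylor-expand $s(x_{i-1}) - 2 s(x_i) + s(x_{i+1})$ using $|s''| \le C\varepsilon^{-2}e^{-x\sqrt{m}/\varepsilon} \le C\varepsilon^{-2}N^{-2}$, and absorb the $\varepsilon^{-2}$ through $h/\varepsilon = O(\ln N/N)$ and $1/\sinh(\beta h) \sim 1/(\beta h)$, recovering $O(N^{-2})$ after dividing by $\mathcal{D}_i \sim \beta h$.

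The main obstacle is the $r'$-cancellation: without the sharp bound $|g'(u)| = O(\beta u)$ as $\beta u \to 0$, only an $O(N^{-1}/\ln N)$ estimate would follow in the fine-mesh regime, so the whole bound stands or falls on this derivative control. Once $|g'|$ is handled, the remaining bookkeeping between the small- and large-$\beta h$ regimes is routine and structurally parallels the proof of Lemma \ref{lema1}, where $(\cosh - 1)/\sinh = \tanh(\cdot/2)$ plays in the numerator the role that $1/\sinh$ plays here.
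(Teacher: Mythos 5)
The paper itself offers no proof of this lemma --- it is quoted verbatim from \cite{samir2015uniformly} --- so there is nothing internal to compare against; I therefore assess your argument on its own terms. Your architecture (decomposition $y=r+s$, the identity $(\cosh u-1)/\sinh u=\tanh(u/2)$, and the two regimes in $\beta h$) is the right one, and your treatment of the regular component is sound: the crux really is the bound $|g'(u)|\leqslant C\min(\beta u,1)$ for $g(u)=u/\sinh(\beta u)$, which together with $|h_i-h_{i-1}|\leqslant CN^{-2}$ and $\mathcal{D}_i\geqslant c\min(\beta h_{\max},1)$ gives the $O(N^{-2})$ cancellation you describe.

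The gap is in the layer component on the fine/transition part. For $i\geqslant N/4$ the mesh generated by \eqref{meshShishkin2} is \emph{not} locally uniform ($h_i-h_{i-1}=6p(i-N/4)/N^3$, and already $h_{N/4}\neq h_{N/4-1}$), so the quantity you propose to Taylor--expand, $s(x_{i-1})-2s(x_i)+s(x_{i+1})$, is not what appears in the numerator: expanding each difference about $x_i$ leaves the cross term
\begin{equation*}
s'(x_i)\,\frac{\dfrac{h_i}{\sinh(\beta h_i)}-\dfrac{h_{i-1}}{\sinh(\beta h_{i-1})}}{\tanh(\beta h_{i-1}/2)+\tanh(\beta h_i/2)},
\end{equation*}
which your sketch never addresses. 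Moreover, the mechanism you use for the analogous $r'$ term does not transfer: since $|s'(x_i)|\leqslant C\varepsilon^{-1}N^{-2}$ carries an uncompensated $\varepsilon^{-1}$, the mean--value--theorem route via $|h_i-h_{i-1}|\leqslant CN^{-2}$ only yields $C\varepsilon^{-1}N^{-4}$, which is not $O(N^{-2})$ when $\varepsilon\ll N^{-2}$ --- a case permitted by the hypothesis $\varepsilon\leqslant C/N$. The term is nevertheless controllable, but by a different cancellation: writing $u/\sinh(\beta u)=\beta^{-1}G(\beta u)$ with $G(v)=v/\sinh v$, one checks that $|G(v_1)-G(v_0)|/(\tanh(v_0/2)+\tanh(v_1/2))\leqslant C$ uniformly in $v_0,v_1>0$, so the prefactor $\beta^{-1}=\varepsilon/\sqrt{\gamma}$ absorbs the $\varepsilon^{-1}$ from $s'$ and gives $O(N^{-2})$ with no use of $|h_i-h_{i-1}|$ at all. (The same uniform bound also covers the mixed case $\beta h_{i-1}$ small, $\beta h_i$ large, which occurs at $i=N/4$ for very small $\varepsilon$ and which your clean two--regime dichotomy glosses over.) With this one repair your proof goes through; the $s''$ remainders and the coarse regime are handled exactly as you say.
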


\begin{lemma}\label{lema3}
      Assume that $\varepsilon \leqslant \frac{C}{N}.$ In the part of the modified Shishkin mesh from Section \ref{mreze} when $x_i,\,x_{i\pm 1}\in[x_{N/4-1},\lambda]\cup[\lambda,1/2],$ we have the following estimate
\begin{multline}
  \frac{\gamma}{\frac{\cosh(\beta h_{i-1})-1}{\sinh(\beta h_{i-1})}+\frac{\cosh(\beta h_{i})-1}{\sinh(\beta h_{i})}}
   \left| \frac{(1-t)f(x_{i-1},y(x_{i-1}))+tf(x_i,y(x_i))}{\gamma}\cdot\frac{\cosh(\beta h_{i-1})-1}{\sinh(\beta h_{i-1})}\right.\\
      -\left.\frac{tf(x_i,y(x_i))+(1-t)f(x_{i+1},y(x_{i+1}))}{\gamma}\cdot\frac{\cosh(\beta h_{i})-1}{\sinh(\beta h_{i})}\right|
      \leqslant \frac{C}{N^2},\:i=N/4,\ldots,N/2-1.
\end{multline}      
\end{lemma}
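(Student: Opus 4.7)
\smallskip
\noindent\textbf{Proof plan.} The plan is to combine the Shishkin decomposition $y=r+s$ of Theorem~\ref{teorema1} with a Taylor expansion of $f$ at $x_i$, in the spirit of Lemmas~\ref{lema1}--\ref{lema2}. Abbreviate $F(x):=f(x,y(x))$ and
\[
A_j:=\frac{\cosh(\beta h_j)-1}{\sinh(\beta h_j)}=\tanh\!\left(\tfrac{\beta h_j}{2}\right),\qquad S:=A_{i-1}+A_i.
\]
A short algebraic rearrangement recasts the target quantity as
\[
E_i=\frac{(1-t)F_{i-1}A_{i-1}+tF_i(A_{i-1}-A_i)-(1-t)F_{i+1}A_i}{S}.
\]

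First I would use the layer estimate \eqref{slojna} to peel off a regular piece. Since $x_{i-1},x_i,x_{i+1}\in[x_{N/4-1},1/2]$ gives $x\geq \lambda-4\lambda/N$ and $1-x\geq 1/2$, and since $e^{-\lambda\sqrt{m}/\varepsilon}=N^{-2}$ under the Shishkin choice of $\lambda$, the layer term obeys $|s(x_j)|\leq CN^{-2}$ and the right-boundary layer is exponentially smaller than any power of $N^{-1}$. Write $F=G+R$ with $G(x):=f(x,r(x))$; the mean value theorem gives $|R(x_j)|\leq\max|f_y|\cdot|s(x_j)|\leq CN^{-2}$. The weights $A_{i-1}$, $A_i$, and $|A_{i-1}-A_i|$ are each majorized by $S$, so the contribution of $R$ to $E_i$ is bounded by $CN^{-2}$ directly.

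For the $G$-contribution $E_G$, $G$ inherits uniformly bounded derivatives from $r$ and $f\in C^k$. Taylor expanding around $x_i$,
\[
G_{i-1}=G_i-h_{i-1}G'_i+\tfrac12 h_{i-1}^2 G''_i+O(h_{\max}^3),\qquad G_{i+1}=G_i+h_i G'_i+\tfrac12 h_i^2 G''_i+O(h_{\max}^3),
\]
and collecting like powers of $G^{(j)}$ yields
\[
E_G=G_i\cdot\frac{A_{i-1}-A_i}{S}-(1-t)G'_i\cdot\frac{h_{i-1}A_{i-1}+h_i A_i}{S}+O(h_{\max}^2),
\]
and the $O(h_{\max}^2)$ tail is already $O(N^{-2})$ since $h_{\max}\leq C/N$.

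The heart of the matter is controlling the two explicit leading terms. I would bound $|A_{i-1}-A_i|\leq\tfrac{\beta}{2}\operatorname{sech}^2(\beta h^*/2)\,|h_{i-1}-h_i|$ (MVT) together with the sharper mesh estimate $|h_i-h_{i-1}|\leq 6|p|(i-N/4)/N^3$ (for $i\geq N/4+1$, with $|h_{N/4}-h_{N/4-1}|=|p|/N^3$), which follows from $\psi''(t)=6p(t-1/4)$ vanishing at the transition. The analysis then splits into two regimes: in the small-$\beta h$ regime (the transition index $i=N/4$ and the first few coarse-mesh intervals, where $h_j=O(\lambda/N)$), one has $A_j\approx\beta h_j/2$, $S\approx\beta(h_{i-1}+h_i)/2$, so the first quotient reduces to $(h_{i-1}-h_i)/(h_{i-1}+h_i)$ and the cubic factor $(i-N/4)/N^3$ absorbs the $1/\lambda$; in the large-$\beta h$ regime (larger $i$, where $\varepsilon\leq C/N$ forces $\beta h_j$ to be of order one or more), the factor $\operatorname{sech}^2(\beta h^*/2)$ is exponentially small while $S$ is bounded below by a positive constant. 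The analogous two-regime argument, together with $0<h_{i-1}A_{i-1}+h_iA_i\leq h_{\max}S$, handles the $G'_i$-term. The main obstacle will be the intermediate range in which $\beta h_j$ is of order one and neither approximation of $\tanh$ is uniformly sharp; there I would use the identity
\[
\frac{A_{i-1}-A_i}{S}=\frac{\sinh(\beta(h_{i-1}-h_i)/2)}{\sinh(\beta(h_{i-1}+h_i)/2)}
\]
(coming from $\tanh a\pm\tanh b=\sinh(a\pm b)/(\cosh a\cosh b)$) to interpolate smoothly between the two regimes. Combining the three pieces yields $|E_i|\leq C/N^2$.
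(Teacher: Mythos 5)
There is a genuine gap, and it sits exactly where you locate ``the heart of the matter.'' Your plan treats $G_i=f(x_i,r(x_i))$ as merely $O(1)$ and tries to extract the full $N^{-2}$ from the mesh quotient $(A_{i-1}-A_i)/S$. That quotient is \emph{not} small on this part of the mesh. Take $i=N/4$ with $\varepsilon$ very small (say $\varepsilon\ll N^{-3}$): then $h_{i-1}=4\lambda/N$ gives $\beta h_{i-1}=O(\ln N/N)$, while $h_{i}=4\lambda/N+p/N^{3}$ gives $\beta h_{i}\geqslant \sqrt{\gamma}\,p/(\varepsilon N^{3})\gg 1$, so $A_{i-1}=\tanh(\beta h_{i-1}/2)=o(1)$ while $A_{i}=\tanh(\beta h_{i}/2)\approx 1$, hence $(A_{i-1}-A_{i})/S\approx -1$. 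Your own identity $(A_{i-1}-A_{i})/S=\sinh\bigl(\beta(h_{i-1}-h_{i})/2\bigr)/\sinh\bigl(\beta(h_{i-1}+h_{i})/2\bigr)$ confirms this: when $\beta h_{i}\gg \beta h_{i-1}$ both hyperbolic sines are dominated by the $\beta h_{i}/2$ contribution and the ratio has modulus close to $1$. The estimate $|h_{i-1}-h_{i}|\leqslant C(i-N/4)/N^{3}$ cannot rescue the argument, because what matters is $\beta|h_{i-1}-h_{i}|$ against $\beta(h_{i-1}+h_{i})$, and $\beta=\sqrt{\gamma}/\varepsilon$ is unbounded; the factor $(i-N/4)/N^{3}$ does not ``absorb the $1/\lambda$'' uniformly in $\varepsilon$. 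So no two-regime analysis of the weights alone can close the proof.

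The missing idea is the one the paper's proof is built on: by the differential equation, $F_j=f(x_j,y(x_j))=\varepsilon^{2}y''(x_j)$, so Theorem \ref{teorema1} gives $|F_j|\leqslant C\bigl(\varepsilon^{2}+e^{-x_j\sqrt{m}/\varepsilon}+e^{-(1-x_j)\sqrt{m}/\varepsilon}\bigr)\leqslant C(\varepsilon^{2}+N^{-2})\leqslant CN^{-2}$ for $x_j\geqslant x_{N/4-1}$ and $\varepsilon\leqslant C/N$. Each $F_j$ is individually $O(N^{-2})$, so no cancellation between the weights is needed: the lemma follows from $A_{i-1}/S\leqslant 1$, $A_{i}/S\leqslant 1$ and the triangle inequality. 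Your $R$-peeling step is correct but superfluous once this is noticed, and your assumption that $G$ is only $O(1)$ undersells it --- the same ODE argument shows $G_i$ itself is $O(N^{-2})$ here, which is the only way your leading term $G_i(A_{i-1}-A_i)/S$ could be salvaged.
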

\begin{proof}
Taking into consideration the assumption $\varepsilon\leqslant \frac{C}{N},$ the equality $\varepsilon^2y''(x_i)=f(x_i,y(x_i)),$ and the Theorem of decomposition, it is hold   
\begin{align}
&      \frac{\gamma}{\frac{\cosh(\beta h_{i-1})-1}{\sinh(\beta h_{i-1})}+\frac{\cosh(\beta h_{i})-1}{\sinh(\beta h_{i})}}
   \left| \frac{(1-t)f(x_{i-1},y(x_{i-1}))+tf(x_i,y(x_i))}{\gamma}\cdot\frac{\cosh(\beta h_{i-1})-1}{\sinh(\beta h_{i-1})} \right.\nonumber\\
& \hspace{4cm}  -\left.\frac{tf(x_i,y(x_i))+(1-t)f(x_{i+1},y(x_{i+1}))}{\gamma}\cdot\frac{\cosh(\beta h_{i})-1}{\sinh(\beta h_{i})}\right| \nonumber \\
&\quad\leqslant \left| (1-t)f(x_{i-1},y(x_{i-1}))+tf(x_i,y(x_i)) + tf(x_{i},y(x_{i}))+(1-t)f(x_{i+1},y(x_{i+1})) \right|\nonumber\\
&\quad \leqslant \varepsilon^2\left[ (1-t)\left(\left|r''(x_{i-1})\right|+\left|s''(x_{i-1}) \right|\right)
                       +2t\left(\left|s''(x_i)\right|+\left|r''(x_i)\right| \right) 
                       +(1-t)\left(\left|s''(x_{i+1})\right|+\left|r''(x_{i+1})\right| \right)\right]\nonumber\\
&\quad\leqslant C_1\varepsilon^2\left[ (1-t)\left(1+\frac{e^{-\frac{x_{i-1}}{\varepsilon}\sqrt{m}}}{\varepsilon^2}\right) 
                                     +2t\left(1+\frac{e^{-\frac{x_i}{\varepsilon}\sqrt{m}}}{\varepsilon^2}\right)
                                     +(1-t)\left(1+\frac{e^{-\frac{x_{i+1}}{\varepsilon}\sqrt{m}}}{\varepsilon^2} \right)    \right] \nonumber\\                      
&\quad \leqslant C\left( \varepsilon^2+\frac{1}{N^2}\right),\:i=N/4,\ldots,N/2-1.                       
\end{align}
\end{proof}

\begin{theorem}\label{thConvergence1}
  The discrete problem \eqref{operator1}--\eqref{operator3} on the modified Shishkin mesh \eqref{meshShishkin2} from Section \ref{mreze} is uniformly convergent with respect $\varepsilon$ and 
  \begin{equation*}
  	\max_i\left|y(x_i)-\overline{y}_i\right|\leqslant C 
  	\left\{\begin{array}{ll}
  		\left( \ln^2 N\right)/N^2,&i=0,\ldots,N/4-1,\\\\
  		 1/ N^2 ,& i=N/4,\ldots,3N/4,\\\\
  		\left( \ln^2 N \right) / N^2 ,& i=3N/4+1,\ldots,N,
  	\end{array}   
  	\right.
  \end{equation*}
  where $y(x_i)$ is the value of the exact solution, $\overline{y}_i$ is the value of the numerical solution  of the problem \eqref{problem1}--\eqref{problem2} in the mesh point $x_i,$ respectively, and $C>0$ is a constant independent of $N$ and $\varepsilon.$
\end{theorem}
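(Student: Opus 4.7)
The plan is to exploit the stability inequality \eqref{thStab1} by choosing $v = \overline{y}$ and $w = y^I := (y(x_0),\ldots,y(x_N))^T$. Since $T\overline{y} = 0$, this yields
\[
\|y^I - \overline{y}\|_\infty \leqslant C\, \|T y^I\|_\infty,
\]
so the task reduces to estimating the consistency error $(Ty^I)_i$ pointwise at every interior node. The boundary contributions vanish because $(Ty^I)_0 = -y(0) = 0$ and $(Ty^I)_N = -y(1) = 0$.

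Next I would partition the mesh into the three natural subregions of the modified Shishkin mesh \eqref{meshShishkin2} and, exploiting the symmetry about $x = 1/2$, treat only the left half. For indices $i = N/4, \ldots, N/2 - 1$ in the transition zone, $(Ty^I)_i$ decomposes into exactly three pieces corresponding to the three lines of \eqref{schema3}: a $(1-t)$-weighted difference in $y$, a $\sinh$-weighted difference in $y$, and an $f$-term. Lemmas \ref{lema1}, \ref{lema2}, and \ref{lema3} bound each of these pieces by $C/N^2$ after the normalization coming from the denominator $(\cosh(\beta h_{i-1})-1)/\sinh(\beta h_{i-1}) + (\cosh(\beta h_i)-1)/\sinh(\beta h_i)$ is applied, yielding $|(Ty^I)_i| \leqslant C/N^2$ in this region.

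For the fine-mesh region $i = 1, \ldots, N/4 - 1$ the mesh is uniform with $h = 4\lambda/N$, so $\beta h = O(N^{-1}\ln N)$ is small. The essential fact is that the Boglaev-type scheme underlying \eqref{schema3} is tuned to the operator $\varepsilon^2 (\cdot)'' - \gamma (\cdot)$, i.e.\ it annihilates $\{1, x, e^{\pm\beta x}\}$. Using the decomposition $y = r + s$ from Theorem \ref{teorema1} and the identity $f(x_i,y(x_i)) = \varepsilon^2 y''(x_i)$, the regular part $r$ can be handled by Taylor expansion, gaining an $O(h^2) = O(\ln^2 N / N^2)$ bound thanks to $|r^{(j)}| \leqslant C$. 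For the layer component $s$ the naive Taylor bound $h^2\|s^{(4)}\|_\infty$ fails because $\|s^{(4)}\|_\infty \sim \varepsilon^{-4}$; instead one writes $s$ as a perturbation of a linear combination of $e^{\pm\sqrt{m}x/\varepsilon}$, on which the exponentially fitted scheme is nearly exact (the mismatch between $\beta = \sqrt{\gamma}/\varepsilon$ and $\sqrt{m}/\varepsilon$ is controlled by $\gamma \geqslant f_y \geqslant m$), so that the residual contribution is again at most $O(\ln^2 N / N^2)$. In the central coarse region $i = N/2, \ldots, 3N/4$, the choice \eqref{shihskinTransitionPoint} of the transition point gives $e^{-\sqrt{m}\lambda/\varepsilon}\leqslant N^{-2}$, hence $s$ and all relevant derivatives are exponentially small there, and $y$ behaves like the smooth component $r$; a direct Taylor expansion of the hyperbolic difference operator then delivers $|(Ty^I)_i|\leqslant C/N^2$.

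The main obstacle is the fine-mesh layer region, where the singular behavior $|s^{(j)}|\sim \varepsilon^{-j}$ must be compensated by the exponential fitting of the scheme; the bookkeeping is delicate because Taylor remainders alone do not suffice and one must carefully use that $\cosh(\beta h_i)-1$, $\sinh(\beta h_i)$, etc., combine to give cancellations on $e^{\pm\beta x}$. A secondary but important point is that the inequality $\|y^I-\overline{y}\|_\infty\leqslant C\|Ty^I\|_\infty$ furnished by stability is only a global bound; to obtain the sharper pointwise estimate $C/N^2$ in the middle block claimed by the theorem, one must either invoke a discrete barrier-function argument or exploit the fact that the discrete Green's function of the $M$-matrix $H = T'(\xi)$ concentrates near the diagonal with exponential decay, so that the smaller local truncation error in $[N/4, 3N/4]$ propagates into a correspondingly smaller local solution error. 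Combining these consistency estimates with the stability/Green's function argument yields the three-region bound stated in the theorem.
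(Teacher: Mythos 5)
Your overall architecture---reduce to the consistency error via the stability inequality (taking $v=\overline y$, $w=y^I$ with $T\overline y=0$), split into the three mesh regions, exploit symmetry about $x=1/2$, and invoke Lemmas \ref{lema1}--\ref{lema3} in the transition region---is exactly the paper's. The transition-region treatment is fine. But your handling of the fine region $i=0,\dots,N/4-1$ contains a genuine error: you assert that the direct Taylor bound fails because $\|s^{(4)}\|_\infty\sim\varepsilon^{-4}$, and you therefore propose to exploit near-exactness of the fitted scheme on $e^{\pm\sqrt{m}x/\varepsilon}$. The first claim is a miscount of the normalization. In this region $h_{i-1}=h_i=\mathcal{O}(\varepsilon\ln N/N)$, the normalizing prefactor in $(Ty)_i$ is $\gamma/\bigl(2(\cosh(\beta h_i)-1)\bigr)\approx\gamma/(\beta^2h_i^2)=\varepsilon^2/h_i^2$, and after the exact cancellation coming from $f(x_i,y(x_i))=\varepsilon^2y''(x_i)$ the bracketed residual is $\mathcal{O}\bigl(y^{(iv)}h_i^4\bigr)$; the product is $\mathcal{O}\bigl(\varepsilon^2h_i^2\|y^{(iv)}\|\bigr)=\mathcal{O}\bigl(\varepsilon^2\cdot\varepsilon^2\ln^2N/N^2\cdot\varepsilon^{-4}\bigr)=\mathcal{O}\bigl(\ln^2N/N^2\bigr)$. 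So the ``naive'' Taylor expansion is precisely what the paper does, and it closes the estimate. Worse, your proposed substitute is not sound as stated: the scheme annihilates $e^{\pm\beta x}$ with $\beta=\sqrt{\gamma}/\varepsilon$, not $e^{\pm\sqrt{m}x/\varepsilon}$, and the inequality $\gamma\geqslant f_y\geqslant m$ gives no smallness of the mismatch between these two exponentials, so ``nearly exact on the layer component'' does not follow.

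One point in your favor: you correctly observe that the $\ell^\infty$ stability bound $\|y-\overline y\|\leqslant C\|Ty\|$ only transfers the worst-case consistency error $\mathcal{O}(\ln^2N/N^2)$ uniformly to every node and does not by itself yield the sharper $C/N^2$ bound claimed for $i=N/4,\dots,3N/4$; a discrete barrier function or a localization argument for the Green's function of the $M$-matrix $T'(\xi)$ would indeed be needed for that. The paper's own proof is silent on this step (it simply ``collects'' the regional consistency estimates), so this remark of yours identifies a real gap in the published argument rather than a defect of your own plan; however, you only gesture at the fix without carrying it out.
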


\begin{proof}\ \\
\noindent\textbf{Case $0\leqslant i<N/4-1.$} Here it's hold $h_{i-1}=h_i$ and $h_i=\mathcal{O}(\varepsilon\ln N/N).$ We  have

\begin{align}
&(Ty)_i=\nonumber\\
&  \frac{\gamma}{\frac{\cosh(\beta h_{i-1})-1}{\sinh(\beta h_{i-1})}+\frac{\cosh(\beta h_i)-1}{\sinh(\beta h_i)}}
      \left[ \frac{(1-t)\cosh(\beta h_{i-1})+t}{\sinh(\beta h_{i-1})}\left( y(x_{i-1})-y(x_i)\right)
         - \frac{(1-t)\cosh(\beta h_{i})+t}{\sinh(\beta h_{i})}\left( y(x_{i})-y(x_{i+1})\right) \right.\nonumber\\
& \hspace{3cm} -\frac{(1-t)f(x_{i-1},y(x_{i-1}))+tf(x_i,y(x_i)) }{\gamma}\cdot\frac{\cosh(\beta h_{i-1})-1}{\sinh(\beta h_{i-1})} \nonumber\\
&\hspace{4cm}        -\left.\frac{tf(x_{i},y(x_{i}))+(1-t)f(x_{i+1},y(x_{i+1})) }{\gamma}\cdot\frac{\cosh(\beta h_{i})-1}{\sinh(\beta h_{i})}\right]\nonumber\\
&=\frac{\gamma}{2(\cosh(\beta h_i)-1)}
\left\{  t\left[y(x_{i-1})-2y(x_i)+y(x_{i+1})- \frac{2f(x_i,y(x_i))}{\gamma}(\cosh(\beta h_i)-1)\right]      \right. \nonumber \\
&\hspace{.5cm} +(1-t)\left.\left[\cosh(\beta h_i)\left(y(x_{i-1})-2y(x_i)+y(x_{i+1}) \right)
              -\frac{f(x_{i-1},y(x_{i-1}))+f(x_{i+1},y(x_{i+1}))}{\gamma}\cdot  (  \cosh(\beta h_i)-1 )      \right] \right\}\nonumber\\
&=\frac{\gamma}{2(\cosh(\beta h_i)-1)}
\left\{  t\left[y(x_{i-1})-2y(x_i)+y(x_{i+1})- \frac{2\varepsilon^2 y''(x_i)}{\gamma}(\cosh(\beta h_i)-1)\right]      \right. \nonumber \\
&\hspace{.5cm} +(1-t)\left.\left[\cosh(\beta h_i)\left(y(x_{i-1})-2y(x_i)+y(x_{i+1}) \right)
              -\varepsilon^2\frac{ y''(x_{i-1}) +y''(x_{i+1}) }{\gamma}\cdot (  \cosh(\beta h_i)-1 )      \right] \right\}.\nonumber              
\end{align}
Using Taylor's expansions 
\begin{align*}
&  y(x_{i-1})-2y(x_i)+y(x_{i+1})=y''(x_i)h^2_i+\frac{y^{(iv)}(\xi^{-}_i)+y^{(iv)}(\xi^{+}_i)}{24}h^4_i,\,\xi^{-}_i\in(x_{i-1},x_i),\,\xi^{+}_i\in(x_i,x_{i+1}),\\
&  y''(x_{i-1})+y''(x_{i+1})=2y''(x_i)+\frac{y^{(iv)}(\eta^{-}_i)+y^{(iv)}(\eta^{+}_i)}{2}h^2_i,\,\eta^{-}_i\in(x_{i-1},x_i),\,\eta^{+}_i\in(x_i,x_{i+1}),\\
&  \cosh(\beta h_i)=1+\frac{\beta^2h^2_i}{2}+\mathcal{O}\left(\beta^4h^4_i\right)  
\end{align*}
we get

\begin{align}
  (Ty)_i=& \frac{\gamma\cdot t}{\beta^2 h^2_i+2\mathcal{O}(\beta^4 h^4_i)}
     \left[y''(x_i)h^2_i+\frac{y^{(iv)}(\xi^{-}_i)+y^{(iv)}(\xi^{+}_i)}{24}h^4_i
          -\frac{2\varepsilon^2y''(x_i)}{\gamma}\left(\frac{\beta^2h^2_i}{2}+\mathcal{O}(\beta^4h^4_i) \right)  \right]\nonumber\\
      &+ \frac{\gamma\cdot (1-t)}{\beta^2 h^2_i+2\mathcal{O}(\beta^4 h^4_i)} 
      \left[\left( 1+\frac{\beta^2h^2_i}{2}+\mathcal{O}(\beta^4h^4_i)\right) 
         \left(y''(x_i)h^2_i+\frac{y^{(iv)}(\xi^{-}_i)+y^{(iv)}(\xi^{+}_i)}{24}h^4_i \right)\right.   \nonumber\\
      &\hspace{3.5cm}-\left.\varepsilon^2\frac{2y''(x_i)+\frac{y^{(iv)}(\eta^{-}_i)+y^{(iv)}(\eta^{+}_i)}{2}}{\gamma}
      \left(\frac{\beta^2h^2_i}{2}+\mathcal{O}(\beta^4h^4_i) \right)  \right] \nonumber\\
     =& \frac{\gamma\cdot t}{\beta^2 h^2_i+2\mathcal{O}(\beta^4 h^4_i)}
       \left[ \frac{y^{(iv)}(\xi^{-}_i)+y^{(iv)}(\xi^{+}_i)}{24}h^4_i -\frac{2\varepsilon^2y''(x_i)}{\gamma}\mathcal{O}(\beta^4h^4_i)\right]\nonumber\\
     &+ \frac{\gamma\cdot (1-t)}{\beta^2 h^2_i+2\mathcal{O}(\beta^4 h^4_i)}  
     \left[ \frac{y^{(iv)}(\xi^{-}_i)+y^{(iv)}(\xi^{+}_i)}{24}h^4_i +\left(\frac{\beta^2h^2_i}{2}+\mathcal{O}(\beta^4h^4_i) \right) 
                   \left(y''(x_i)h^2_i+\frac{y^{(iv)}(\xi^{-}_i)+y^{(iv)}(\xi^{+}_i)}{24}h^4_i  \right)   \right.   \nonumber\\
    &-\left.\frac{2\varepsilon^2y''(x_i)}{\gamma}\cdot\mathcal{O}(\beta^4h^4_i)
      +  \varepsilon^2\frac{y^{(iv)}(\eta^{-}_i)+y^{(iv)}(\eta^{+}_i)}{2\gamma}h^2_i\left(\frac{\beta^2h^2_i}{2}+\mathcal{O}(\beta^4h^4_i) \right)  \right],           
\end{align}
and finally
\begin{equation}
     \left|(Ty)_i\right|\leqslant \left( C\ln^2N \right) / N^2 ,\,i=0,1,\ldots,N/4-1.
  \label{thuniform0}
\end{equation}

\noindent\textbf{Case $N/4\leqslant i<N/2.$}   Due to \eqref{schema3} we have the next inequality 

\begin{align}
   \left|(Ty)_i\right|\leqslant& \frac{\gamma}{\frac{\cosh(\beta h_{i-1})-1}{\sinh(\beta h_{i-1})}+\frac{\cosh(\beta h_i)-1}{\sinh(\beta h_i)}}\nonumber\\
  &  \cdot\left[(1-t) \left|\frac{\cosh(\beta h_{i-1})-1}{\sinh(\beta h_{i-1})}(y(x_{i-1})-y(x_i))
              -\frac{\cosh(\beta h_{i})-1}{\sinh(\beta h_{i})}(y(x_{i})-y(x_{i+1}))  \right|\right. \nonumber\\
     &\quad+ \left|\frac{y(x_{i-1})-y(x_i)}{\sinh(\beta h_{i-1})}
              -\frac{y(x_{i})-y(x_{i+1})}{\sinh(\beta h_{i})} \right|   \nonumber\\ 
     &\quad +\left| \frac{(1-t)f(x_{i-1},y(x_{i-1}))+tf(x_i,y(x_i))}{\gamma}\cdot \frac{\cosh(\beta h_{i-1})-1}{\sinh(\beta h_{i-1})}\right|  \nonumber\\
     &\quad +\left.\left| \frac{tf(x_{i},y(x_{i}))+(1-t)f(x_{i+1},y(x_{i+1}))}{\gamma}\cdot \frac{\cosh(\beta h_{i})-1}{\sinh(\beta h_{i})}\right|\right], \nonumber        
\end{align}
and according Lemma \ref{lema1}, Lemma \ref{lema2} and Lemma \ref{lema3} we obtain
\begin{equation}
    \left| (Ty)_i\right|\leqslant  C/ N^2,\,i=N/4,\ldots,N/2-1.
 \label{thuniform1}
\end{equation}

\noindent\textbf{Case $i=N/2.$} This case is trivial, because $h_{N/4-1}=h_{N/4}$ and the influence of the layer component $s$ is negligible. \\

\noindent Collecting \eqref{thuniform0}, \eqref{thuniform1} and taking in account Case $i=N/2,$ we have proven the theorem.  
\end{proof}

\section{Global solution}
In the paper \cite{samir2017construction} a global numerical solution was constructed using a spline in tension, and the authors proved the uniform convergence of order 1 for this solution on the modified Shishkin mesh generated by \eqref{meshShishkin2}. After that they repaired the global numerical solution on $[\lambda,1-\lambda]$ and achieved  the uniform convergence of order 2. That repaired global solution is composed of exponential and linear functions. In the sequel we avoid exponential functions and give a global numerical solution composed by linear functions. We will also include in the numerical experiments a global solution obtained by using a natural cubic spline, because this spline is the lowest degree spline with a continuous second derivative.

\paragraph{Linear spline} Let the global numerical solution to the problem \eqref{problem1}--\eqref{problem2} has the form
\begin{equation}\label{globalSolution1}
   \overline{P}(x)=
   \left\{
   \begin{array}{cl} 
	\overline{p}_1(x),\quad &x\in[x_0,x_1],\\
	\overline{p}_2(x),\quad &x\in[x_1,x_2],\\
	\vdots&\\
	\overline{p}_i(x),\quad&x\in[x_{i-1},x_i], \\
	\vdots&\\
	\overline{p}_N(x),\quad &x\in[x_{N-1},x_{N}],
   \end{array}
   \right.
\end{equation}
where
\begin{equation}
  \overline{p}_i(x)= 
  \left\{ 
    \begin{array}{cc}
        \dfrac{\overline{y}_i-\overline{y}_{i-1}}{x_i-x_{i-1}}(x-x_{i-1})+\overline{y}_{i-1},\quad &x\in[x_{i-1},x_i], \\\\
         0,  \quad &x\notin[x_{i-1},x_i],
    \end{array}
  \right.         
\end{equation}
and $i=1,2,\ldots,N.$

\begin{theorem}
	The following estimate of the error holds
	\begin{equation}
	    \max_{x\in[0,1]}\left|y(x)-\overline{P}(x)\right|\leqslant \left( C\ln^2N \right) /N^2,
	\end{equation}
where $y$ is the exact solution to the problem \eqref{problem1}--\eqref{problem2} and $\overline{P}$  is global numerical solution \eqref{globalSolution1}.	
\end{theorem}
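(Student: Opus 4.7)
The plan is to split the error on each subinterval $[x_{i-1},x_i]$ using the intermediate function $P(x)$, namely the piecewise-linear interpolant of the \emph{exact} solution $y$ at the mesh nodes. Then
\begin{equation*}
\bigl|y(x)-\overline{P}(x)\bigr|\leqslant \bigl|y(x)-P(x)\bigr|+\bigl|P(x)-\overline{P}(x)\bigr|,
\end{equation*}
and the two terms are treated separately. The second is immediate: since both $P$ and $\overline{P}$ are linear on $[x_{i-1},x_i]$, the difference is linear with nodal values $y(x_{i-1})-\overline{y}_{i-1}$ and $y(x_i)-\overline{y}_i$, so Theorem \ref{thConvergence1} gives $\bigl|P(x)-\overline{P}(x)\bigr|\leqslant C\ln^2 N/N^2$ uniformly in $x\in[0,1]$.

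For the interpolation error $\bigl|y(x)-P(x)\bigr|$, I would invoke the decomposition $y=r+s$ of Theorem \ref{theoremDecomposition} and estimate the regular and layer parts separately on the two mesh regions. On the fine portions of the modified Shishkin mesh (for $0\leqslant i\leqslant N/4-1$ and symmetrically near $x=1$) we have $h_i=\mathcal{O}(\varepsilon\ln N/N)$; the standard bound $\|y-P\|_{\infty,[x_{i-1},x_i]}\leqslant h_i^{2}\|y''\|_{\infty}/8$ applied separately to $r$ (where $\|r''\|_\infty\leqslant C$) and $s$ (where $\|s''\|_\infty\leqslant C\varepsilon^{-2}$) yields
\begin{equation*}
\bigl|r(x)-P_r(x)\bigr|\leqslant Ch_i^2\leqslant C\ln^2 N/N^2,\qquad \bigl|s(x)-P_s(x)\bigr|\leqslant Ch_i^2\varepsilon^{-2}\leqslant C\ln^2 N/N^2.
\end{equation*}
On the coarse portion $[\lambda,1-\lambda]$ one has $h_i=\mathcal{O}(1/N)$, but there the layer is already tiny: for $x\in[\lambda,1-\lambda]$, $\lambda=2\varepsilon\ln N/\sqrt m$ gives $e^{-x\sqrt m/\varepsilon}+e^{-(1-x)\sqrt m/\varepsilon}\leqslant 2N^{-2}$, so by \eqref{slojna} with $j=0$ we get $|s(x)|\leqslant C/N^2$ for every $x$ in this region, hence $|s(x)-P_s(x)|\leqslant 2\|s\|_{\infty}\leqslant C/N^2$ at all points of the subinterval. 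Meanwhile the bound $\|r''\|_\infty\leqslant C$ still yields $|r(x)-P_r(x)|\leqslant C/N^2$.

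The only delicate spot will be the transition subinterval containing the Shishkin point $\lambda$, where one endpoint sits in the fine region and the other in the coarse region; here I would use the smoothness of the generating function $\psi$ from \eqref{meshShishkin2} (recalling $\psi\in C^{1}[0,1]$ and $\|\psi''\|_\infty\leqslant C$, whence $|h_{i+1}-h_i|\leqslant CN^{-2}$) to conclude that $h_i$ is still $\mathcal{O}(\varepsilon\ln N/N)$ there, so the fine-region estimate applies. Assembling the pieces via the triangle inequality gives $\|y-P\|_\infty\leqslant C\ln^2 N/N^2$, and combined with the nodal-error bound this proves the theorem. The main obstacle I expect is precisely this careful bookkeeping in the transition subinterval and verifying that no region of $[0,1]$ is missed when the layer part is controlled either by smallness of $s$ (coarse region) or by smallness of $h_i$ (fine region).
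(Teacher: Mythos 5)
Your overall strategy is exactly the paper's: the triangle inequality through the linear interpolant $P$ of the exact solution, Theorem \ref{thConvergence1} for $\Vert P-\overline{P}\Vert_\infty$, the bound $h_i^2\Vert y''\Vert_\infty/8$ with $h_i=\mathcal{O}(\varepsilon\ln N/N)$ on $[0,\lambda]$, and the decomposition $y=r+s$ with pointwise smallness of $s$ (namely $e^{-\lambda\sqrt m/\varepsilon}=N^{-2}$) plus interpolation of $r$ on the coarse part; the paper merely splits the coarse region into $[\lambda,x_{N/4+1}]$ and $[x_{N/4+1},1/2]$ and uses a direct $|y''|$ bound on the latter, which is a cosmetic difference. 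The one place you go wrong is the final paragraph. First, there is no subinterval straddling the fine and coarse regions: $\lambda=\psi(1/4)=x_{N/4}$ is itself a mesh node, so $[x_{N/4},x_{N/4+1}]=[\lambda,x_{N/4+1}]$ lies entirely in the region where your smallness-of-$s$ argument already applies --- this is precisely how the paper treats that subinterval. Second, the fix you propose there would actually fail: $h_{N/4}=p N^{-3}+4\lambda/N$, and when $\varepsilon$ is very small the term $pN^{-3}$ dominates, so $h_{N/4}$ is \emph{not} $\mathcal{O}(\varepsilon\ln N/N)$; feeding it into the fine-region estimate produces a term of order $N^{-6}\varepsilon^{-2}$ (or $N^{-4}\varepsilon^{-2}$ if you only use $|h_{i+1}-h_i|\leqslant CN^{-2}$), which is unbounded as $\varepsilon\to 0$. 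So you should simply delete that patch and note that the coarse-region argument covers $[\lambda,x_{N/4+1}]$; with that correction the proof is complete and coincides with the paper's.
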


\begin{proof}
	We divide this proof in three parts, $[0,\lambda],\:[\lambda,x_{N/4+1}]$ and $[x_{N/4}+1,1/2]$. The proof is analogues on $[1/2,1].$ The proof is based on the inequality $\Vert \overline{P}-y\Vert_{\infty}\leqslant \Vert \overline{P}-P\Vert_{\infty}+\Vert P-y\Vert_{\infty},$ and a theorem on the interpolation error and its corollaries. For our purpose we use \cite[Example 8.12]{kress1998numerical}.  By $P$ is designated a piecewise polynomial obtained in the same way like $\overline{P},$ but $P$ passes trough the points with the coordinates $(x_{i-1},y(x_{i-1})),\:(x_{i},y(x_{i})),\:i=1,2,\ldots,N;$ instead of $(x_{i-1},\overline{y}_{i-1}),\:(x_{i},\overline{y}_{i+1}),\:i=1,2,\ldots,N,$

	\begin{equation}\label{globalSolution1a}
	P(x)=
	\left\{
	\begin{array}{cl} 
	p_1(x),\quad &x\in[x_0,x_1],\\
	p_2(x),\quad &x\in[x_1,x_2],\\
	\vdots&\\
	p_i(x),\quad&x\in[x_{i-1},x_i], \\
	\vdots&\\
	p_N(x),\quad &x\in[x_{N-1},x_{N}],
	\end{array}
	\right.
	\end{equation}
	where
	\begin{equation}
	p_i(x)= 
	\left\{ 
	\begin{array}{cc}
	\dfrac{ y_i- y_{i-1}}{x_i-x_{i-1}}(x-x_{i-1})+ y_{i-1},\quad &x\in[x_{i-1},x_i], \\\\
	0,  \quad &x\notin[x_{i-1},x_i],
	\end{array}
	\right.         
	\end{equation}
	and $i=1,2,\ldots,N.$ \\
	
	\noindent Taking in account the constructions \eqref{globalSolution1}, \eqref{globalSolution1a} and Theorem \ref{thConvergence1} holds 
	\begin{equation}\label{globalSolution2a}
	\Vert P-\overline{P}\Vert_{\infty}\leqslant \left( C\ln^2N \right) /N^2,\quad x\in[0,1].
	\end{equation} 
	
	The first part is on the subinterval $[0,\lambda],$ this one corresponds with the mesh when $i=1,2,\ldots, N/4.$ Here, the mesh is equidistant i.e. $h_{i-1}=h_i,$ and $h_i=\mathcal{O}\left(\varepsilon \ln N/N\right).$ Using Theorem \ref{theoremDecomposition}, \cite[Example 8.12]{kress1998numerical}, $h_i=\mathcal{O}\left(\varepsilon \ln N/N\right)$ we have that 
	\begin{align}\label{globalSolution2b}
	 |y(x)-p_i(x)|
	     \leqslant& \frac{h^2_i}{8}\max_{\xi \in[x_{i-1},x_{i}]}\left|y''(\xi )\right| 
	       \leqslant C_1 \frac{\varepsilon^2 \ln^2N}{N^2}\max_{\xi\in[x_{i-1},x_{i}]}\left| s''(\xi)+r''(\xi) \right| \nonumber\\
	     \leqslant& C_2 \frac{\varepsilon^2 \ln^2N}{N^2}\max_{\xi\in[x_{i-1},x_{i}]}\left| 
	               \varepsilon^{-2}\left(e^{-\frac{\xi}{\varepsilon}\sqrt{m}}+e^{-\frac{(\xi-1)}{\varepsilon}\sqrt{m}} \right)    +r''(\xi) \right| \nonumber\\
	     \leqslant& C_2 \frac{\varepsilon^2 \ln^2N}{N^2}(\varepsilon^{-2}+C_3) \leqslant\frac{C\ln^2 N}{N^2},\:i=1,2,\ldots,N/4.          
	\end{align}
	
	The remain of the proof, i.e. for  $x\in[\lambda,x_{N/4+1}]\cup[x_{N/4+1},1/2]$ which corresponds with the mesh for $i=N/4,N/4+1,\ldots, N/2,$ we repeat from \cite{samir2017construction}. \\

	\noindent For $i=N/4+1,\ldots, N/2,$ the mesh isn't equidistant but holds  $h_{i}=\mathcal{O}(1/N).$ According to the Theorem \ref{theoremDecomposition}, to the Theorem \eqref{thConvergence1}, \cite[Example]{kress1998numerical} and the features of the  mesh we obtain 
	\begin{equation}\label{globalSolution2c}
	\left|y(x)-p_i(x)\right|\leqslant \frac{h^2_i}{8}\max_{\xi\in[x_{N/4+1},1/2]}\left|y''(\xi)\right|\leqslant\frac{C}{N^2}.
	\end{equation}     
	On $[\lambda,x_{N/4}+1],$ according to the Theorem \ref{theoremDecomposition} we obtain 
	\begin{align*}
	y-p_i(x)=& y-\frac{y_i-y_{i-1}}{x_i-x_{i-1}}(x-x_{i-1})+y_{i-1} \nonumber\\
	        =& s-\frac{s_i-s_{i-1}}{x_i-x_{i-1}}(x-x_{i-1})+s_{i-1}+r-\frac{r_i-r_{i-1}}{x_i-x_{i-1}}(x-x_{i-1})+r_{i-1}.
	\end{align*}
	For the layer component $s,$ based on the estimate \eqref{slojna}, we have
	\begin{align}
	\left|s-\frac{s_i-s_{i-1}}{x_i-x_{i-1}}(x-x_{i-1})+s_{i-1} \right|
	     \leqslant |s|+|s_{i+1}-s_i|+|s_i|
	        \leqslant C_1\left(e^{-\frac{x_{i-1}}{\varepsilon}\sqrt{m}}+e^{-\frac{x_{i-1}-1}{\varepsilon}\sqrt{m}} \right)
	     \leqslant \frac{C}{N^2}. \label{globalSolution2d}
	\end{align}
	For the regular component $r,$ we apply again the estimate from \cite[Example 8.12]{kress1998numerical}, the estimate \eqref{regularna}, and we have that 
	\begin{equation}\label{globalSolution2e}
	\left|r-\frac{r_i-r_{i-1}}{x_i-x_{i-1}}(x-x_{i-1})+r_{i-1} \right|
	    \leqslant \frac{h^2_{i-1}}{8}\max_{\xi\in[x_{i-1},x_i]}\left|r''(\xi)\right|\leqslant \frac{C}{N^2}.
	\end{equation}
	Collecting \eqref{globalSolution2a}, \eqref{globalSolution2b},  \eqref{globalSolution2c}, \eqref{globalSolution2d} and \eqref{globalSolution2e}, this theorem has been proven. 	
\end{proof}
\paragraph{Cubic spline} In the numerical experiments we will use a natural cubic spline as a global solution. We construct it in the way as follows: design the natural cubic spline by $C,$ 
\begin{equation}\label{globalSpline1}
   C(x)=C_i(x),\:x\in[x_i,x_{i+1}],\:i=0,1,\ldots,N-1,
\end{equation} 
where $C_i$ are the cubic functions
\begin{equation}\label{globalSpline2}
 C_i(x)=M_i\frac{(x_{i+1}-x)^3}{6h_{i+1}}+M_{i+1}\frac{(x-x_i)^3}{6h_{i+1}}+\left[\frac{\overline{y}_{i+1}-\overline{y}_i}{h_{i+1}}-\frac{h_{i+1}}{6}(M_{i+1}-M_i)\right](x-x_i)
         +\overline{y}_i-M_i\frac{h^2_{i+1}}{6},
\end{equation}
the moments $M_i:=C''_i(x_i),\:i=1,N-1$ we get from the system 
\begin{equation}\label{globalSpline3}
   \frac{h_i}{6}M_{i-1}+\frac{h_i+h_{i+1}}{3}M_i+\frac{h_{i+1}}{6}M_{i+1}=\frac{\overline{y}_{i+1}-\overline{y}_i}{h_{i+1}}-\frac{\overline{y}_i-\overline{y}_{i-1}}{h_i},\:i=1,2,\ldots,N-1,
\end{equation}
and $M_0:=C_0''(x_0)=0,$ $M_N:=C_{N-1}''(x_N)=0.$

\section{Numerical experiments}
In this section we conduct numerical experiments in order to confirm the theoretical results, i.e. to confirm the accuracy of the different scheme \eqref{shema2} on the meshes \eqref{meshShishkin1}, \eqref{meshShishkin2}, \eqref{meshVulanovic1} and \eqref{meshLiseikin}.

\begin{example}
	We consider the following  boundary value problem 
	
	\begin{equation}
	\varepsilon ^2y''=y+\cos^2\pi x+2\varepsilon ^2\pi^2\cos^2\pi x\quad\text{on}\quad\left(0,1\right),
	\label{example1}
	\end{equation}
	\begin{equation}
	y(0)=y(1)=0.
	\label{problem}
	\end{equation}
	The exact solution of this problem is
	\begin{equation}
	y(x)=\dfrac{e^{-\frac{x}{\varepsilon}}+e^{\frac{x}{\varepsilon}}}{1+e^{-\frac{1}{\varepsilon}}}-\cos^2\pi x.
	\label{problem3}
	\end{equation} 	
The nonlinear  system was solved using the initial condition $y_0=-0.5$ and the value of the constant $\gamma=1.$ Because of the fact that the exact solution is known, we compute the error $E_N$ and the rate of convergence Ord in the usual way
\begin{equation}
E_N=\Vert y-\overline{y}^{N}\Vert_{\infty},\: {\rm Ord}=\frac{\ln E_N-\ln E_{2N}}{\ln(2k/(k+1))},({\rm Shishkin}),\:\:{\rm Ord}=\frac{\ln E_N-\ln E_{2N}}{\ln 2},\:({\rm Bakhvalov, Liseikin})
\end{equation}
where $N=2^{k},$ $k=4,5,\ldots,12,$ and $y$ is the exact solution of the problem \eqref{problem1}--\eqref{problem3}, while $\overline{y}^N$ an appropriate numerical solution of \eqref{operator1}. The graphics of the numerical and exact solutions, for various values of the parameter $\varepsilon$ are on Figure  \ref{figure1} (left), while fragments of these solutions are on Figure \ref{figure1} (right). The values of $E_N$ and Ord are in Tables \ref{table1}. 	The graphics of the exact and global solution obtained by using a linear spline, and the corresponding error are shown on Figure \ref{figure2}, while the graphics of the exact and global solution obtained by using a natural cubic spline, and the corresponding error are shown on Figure \ref{figure3}. 
\end{example}

\begin{figure}[!h]\centering
	\includegraphics[scale=0.34]{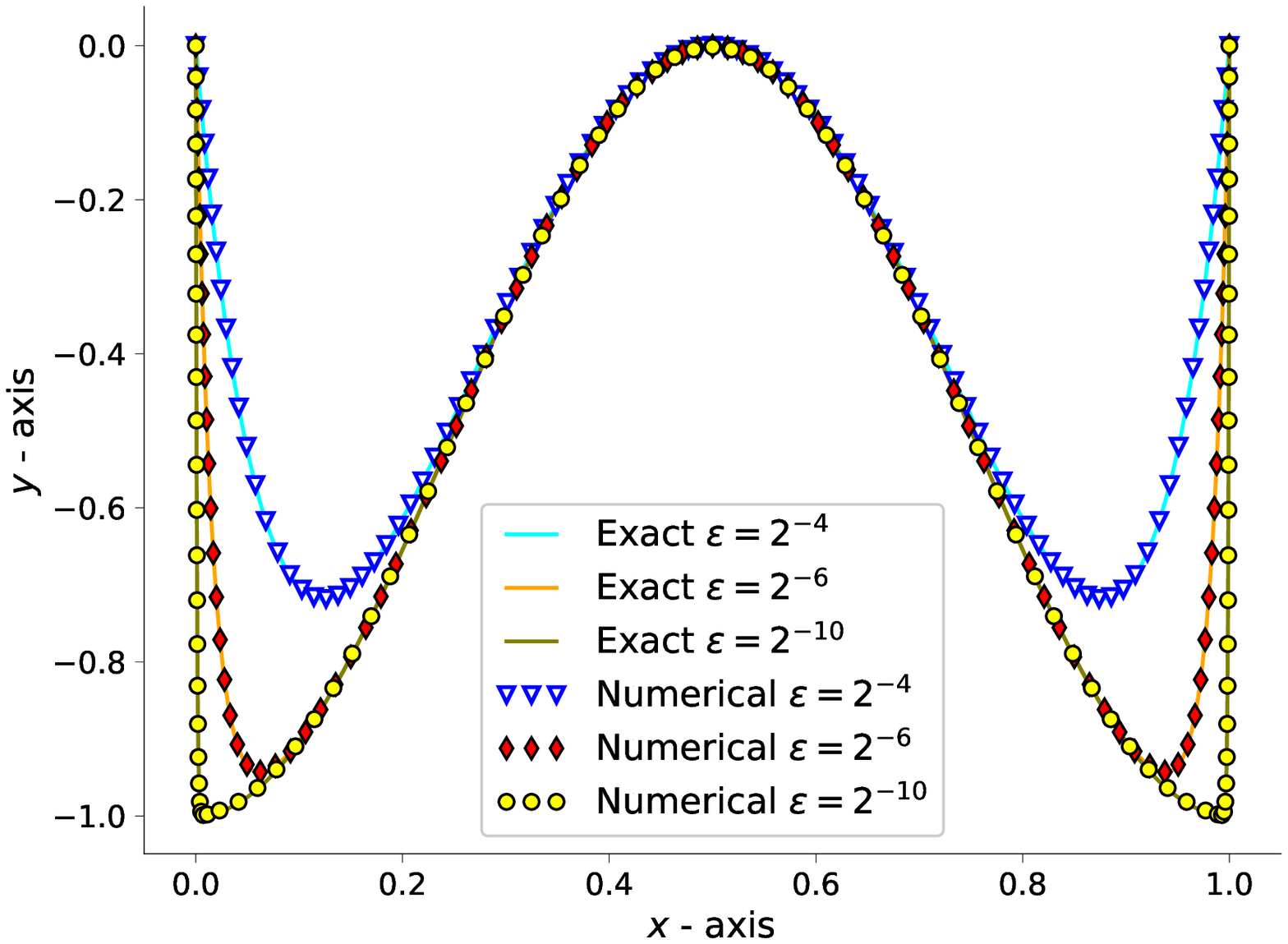}\vspace{.5cm}
	\includegraphics[scale=0.34]{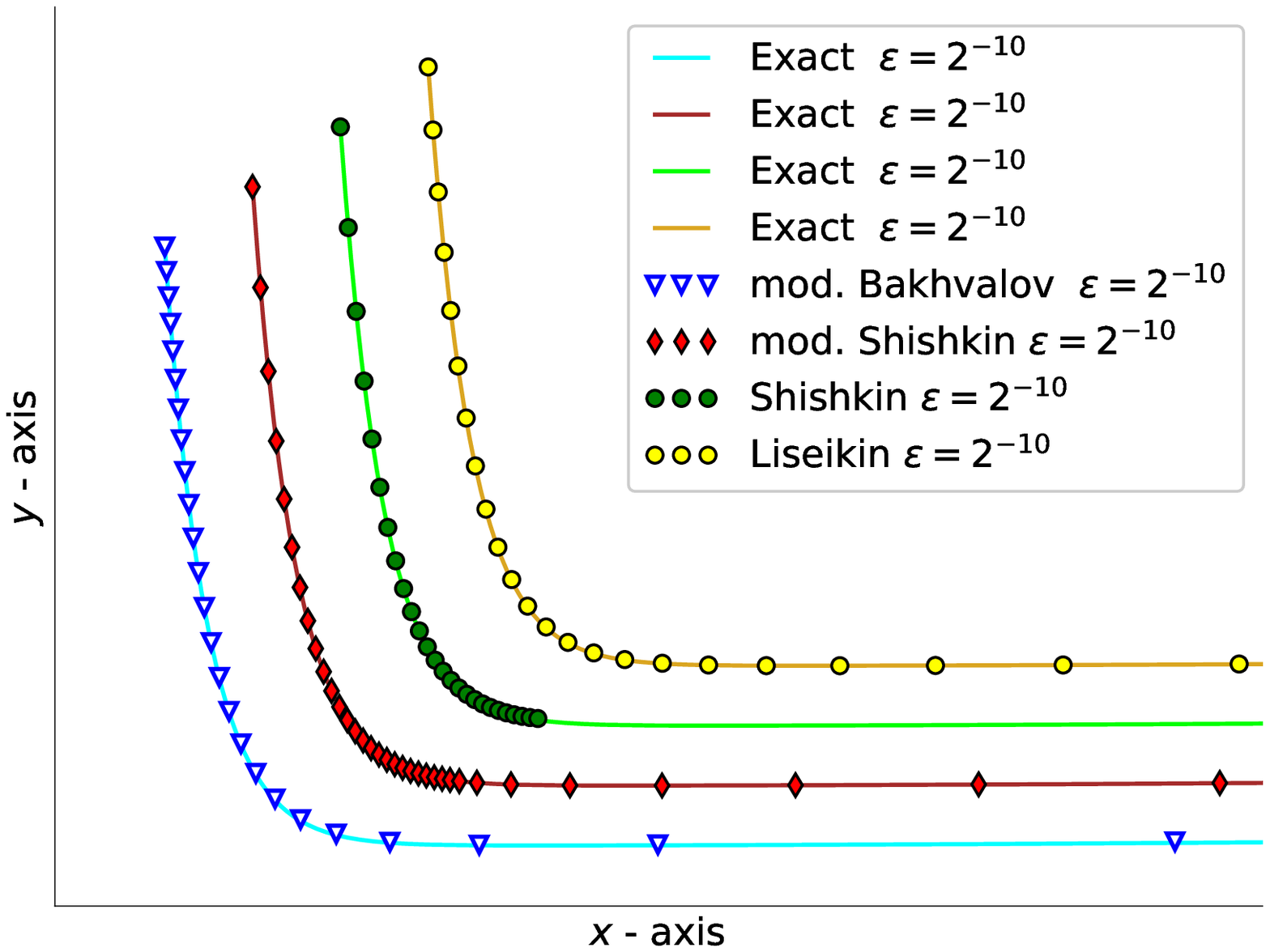}\vspace{.5cm}
	\caption{Exact and numerical solutions (left), layer near $x=0$ (right) }
	 \label{figure1}
\end{figure}

\newpage

\begin{figure}[!h] \centering
	\includegraphics[scale=0.34]{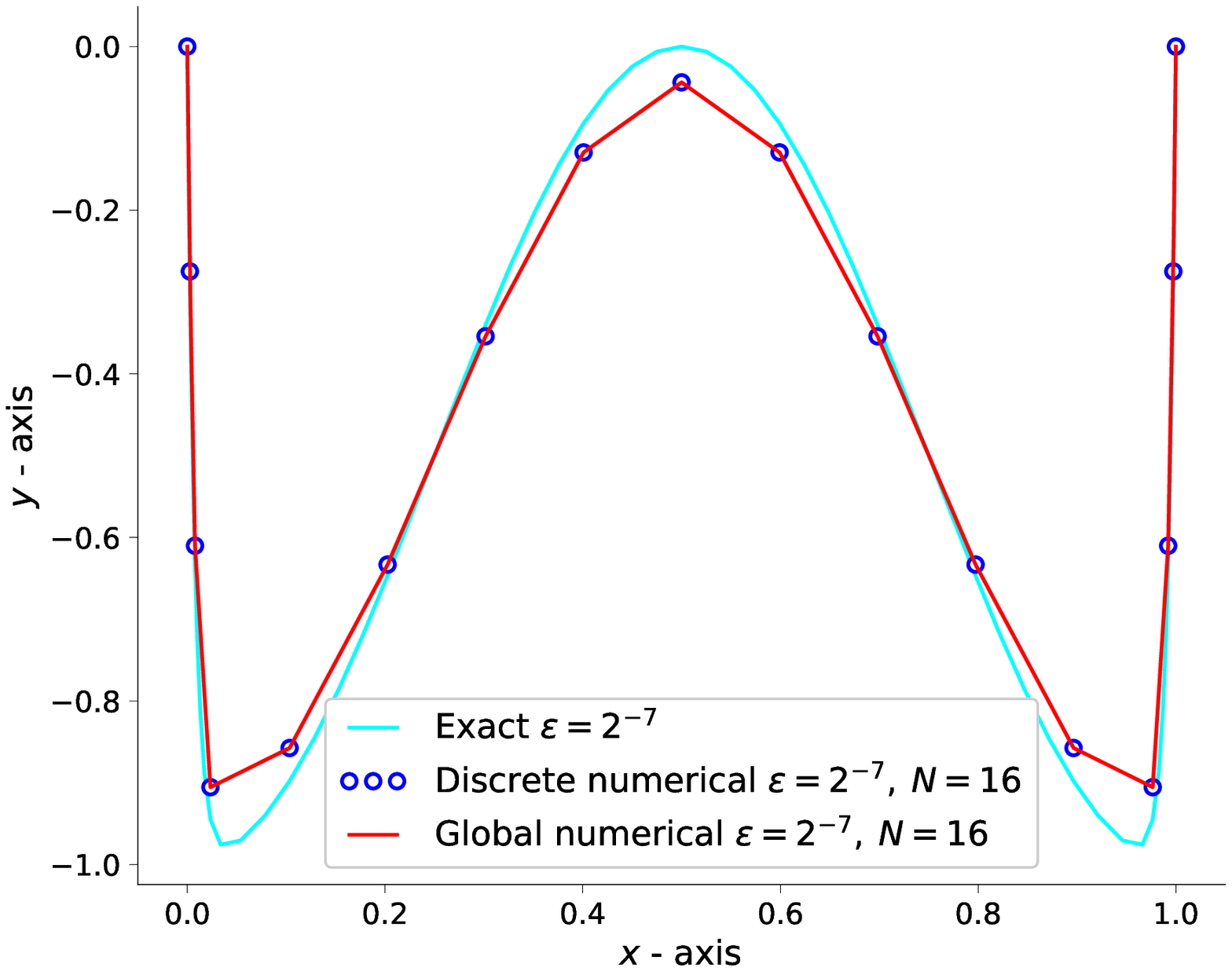}
	\includegraphics[scale=0.34]{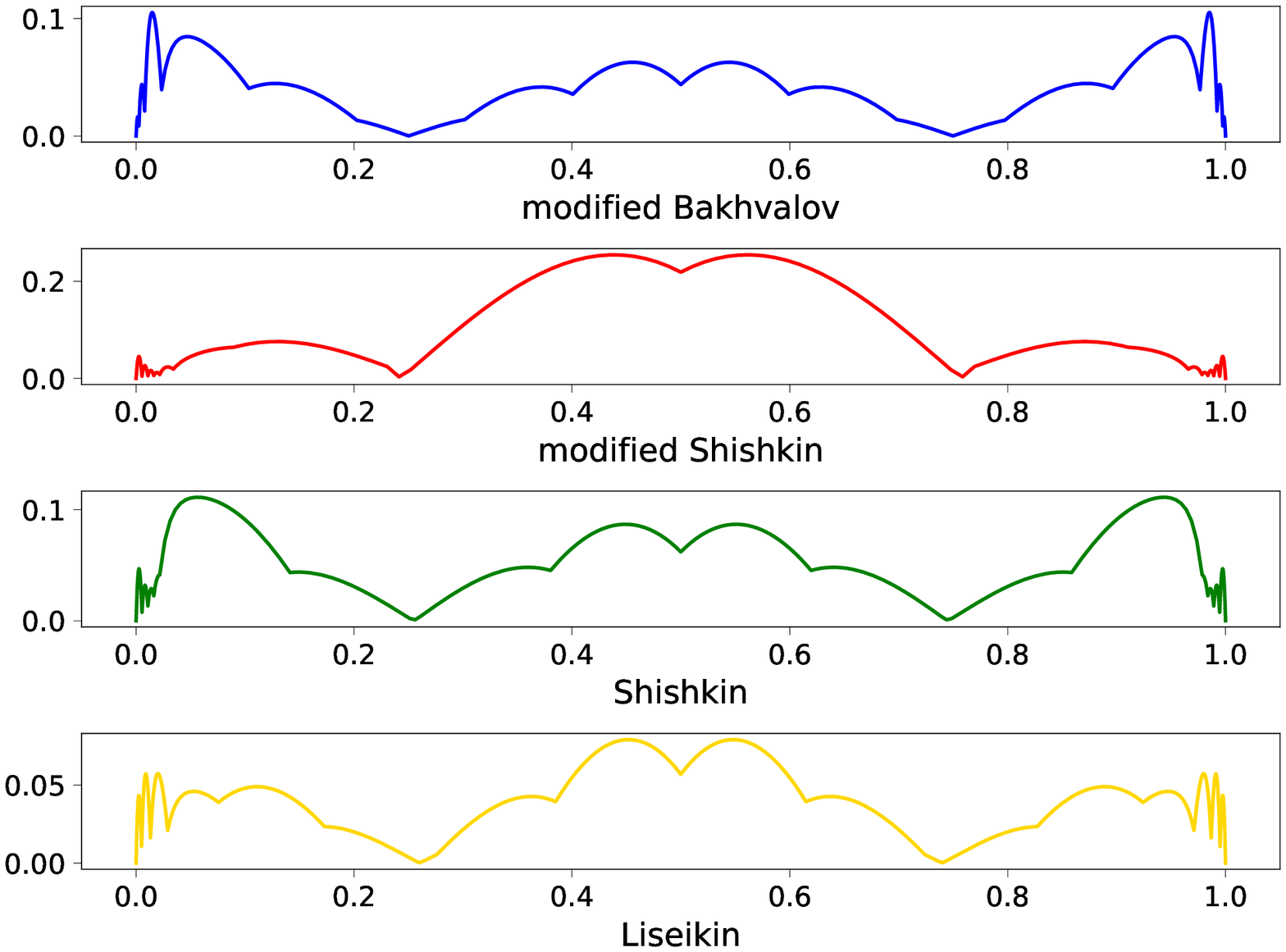}
	
	\includegraphics[scale=0.34]{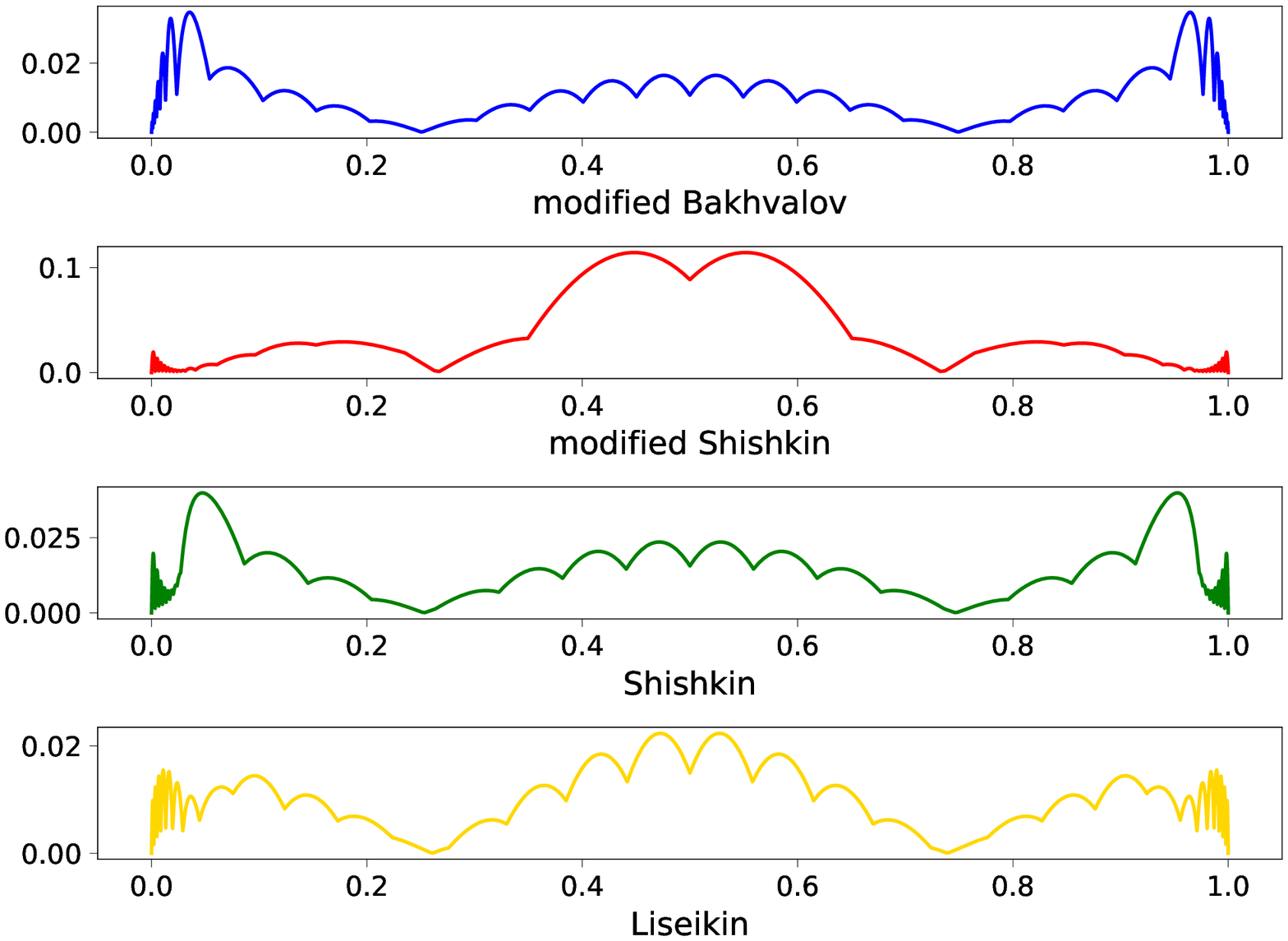}
	\includegraphics[scale=0.34]{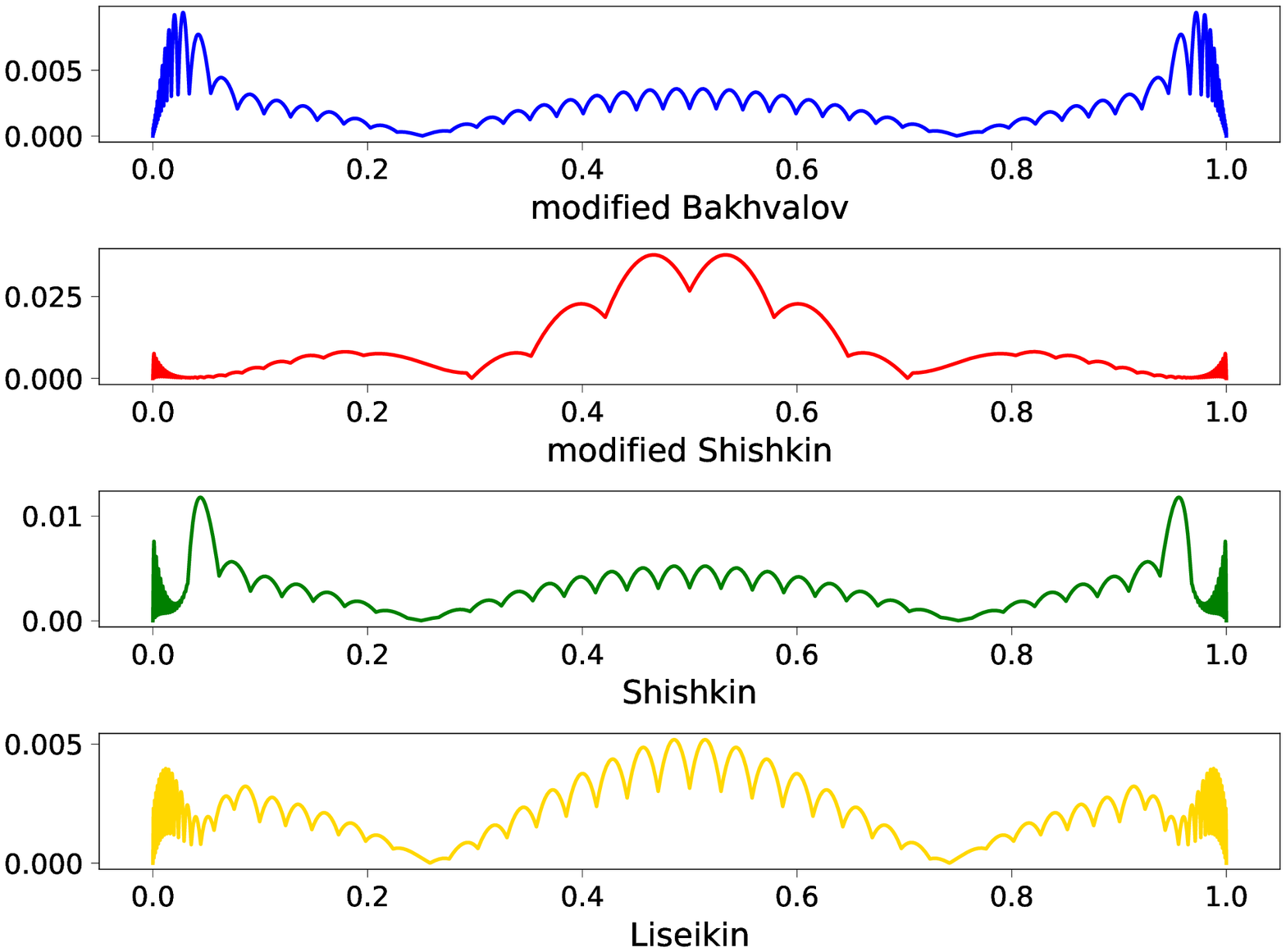}
	\caption{Exact, discrete and global numerical solutions (left up), error  (right up--$N=16$, left down--$N=32$, right down--$N=64$) }
	\label{figure2}
\end{figure}

\begin{figure}[!h] \centering
	\includegraphics[scale=0.34]{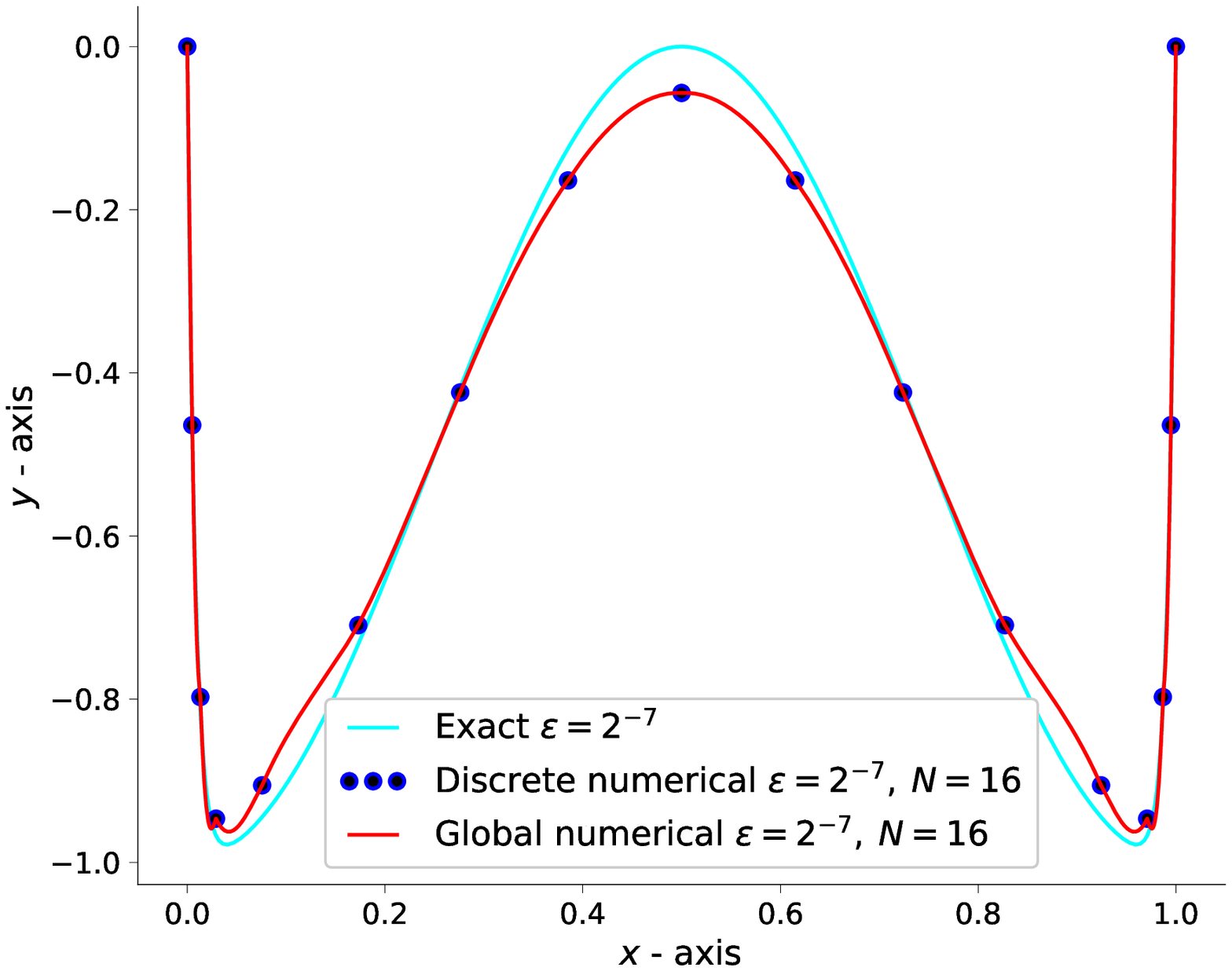}
	\includegraphics[scale=0.34]{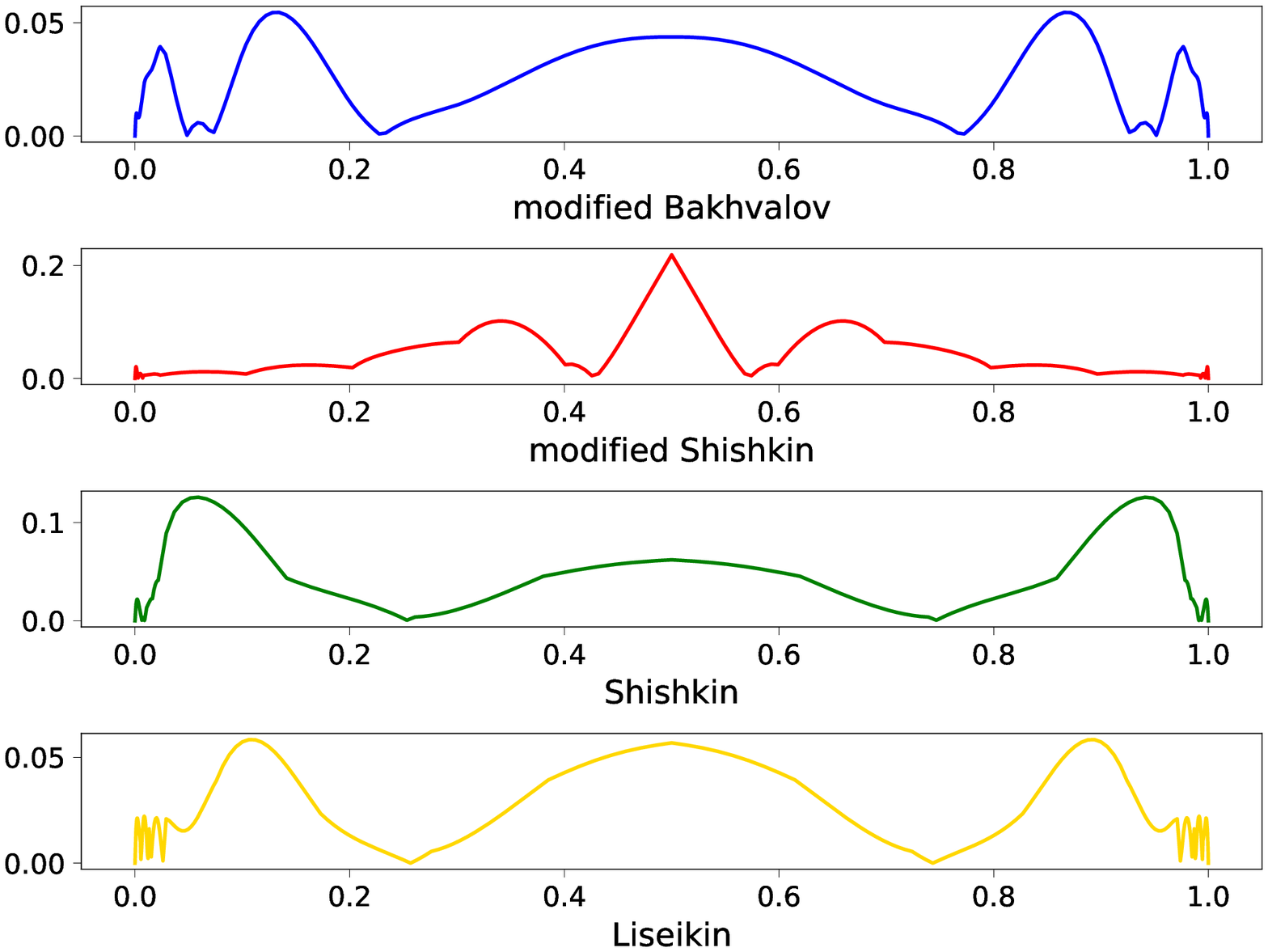}
	
	\includegraphics[scale=0.34]{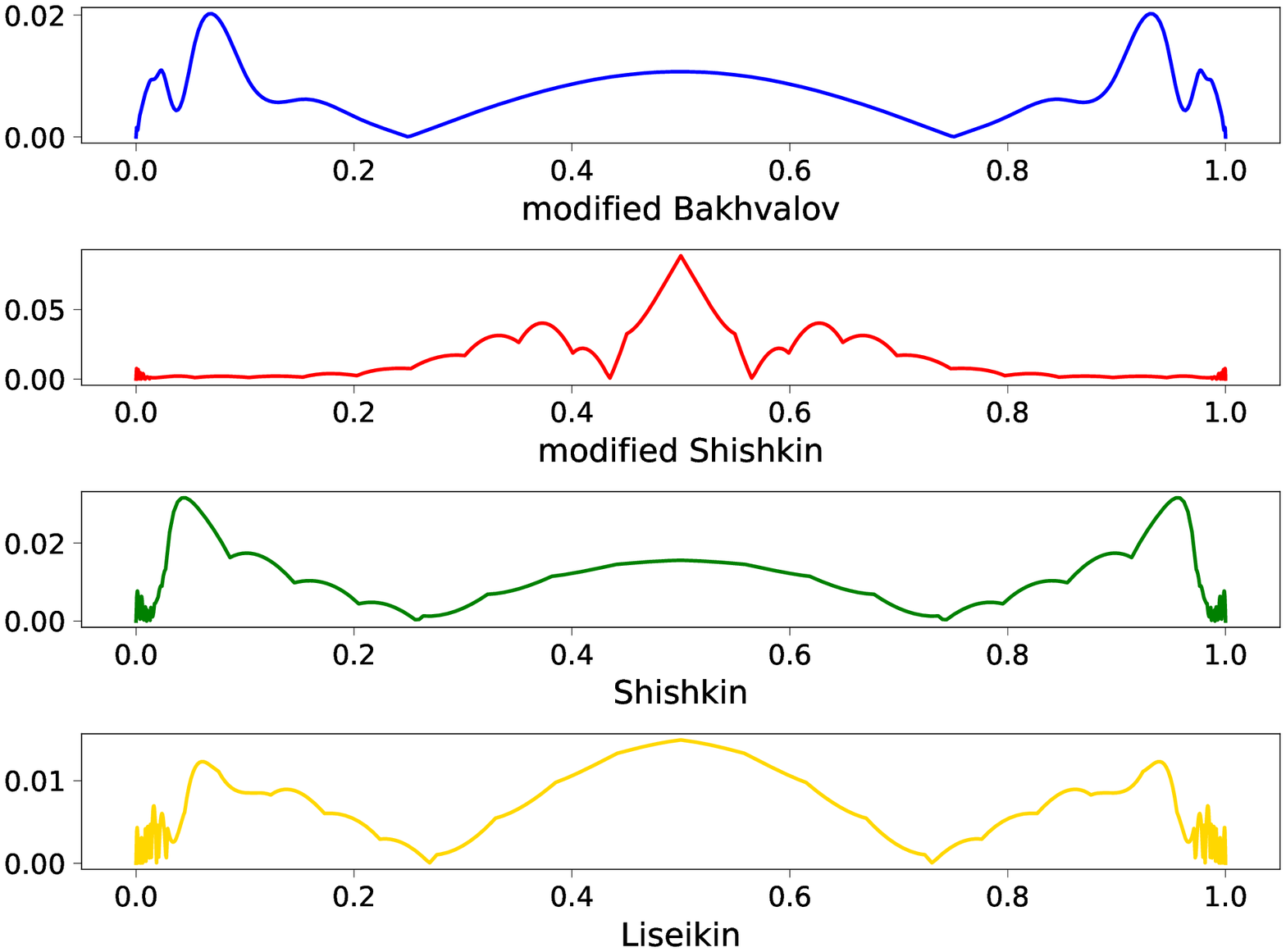}
	\includegraphics[scale=0.34]{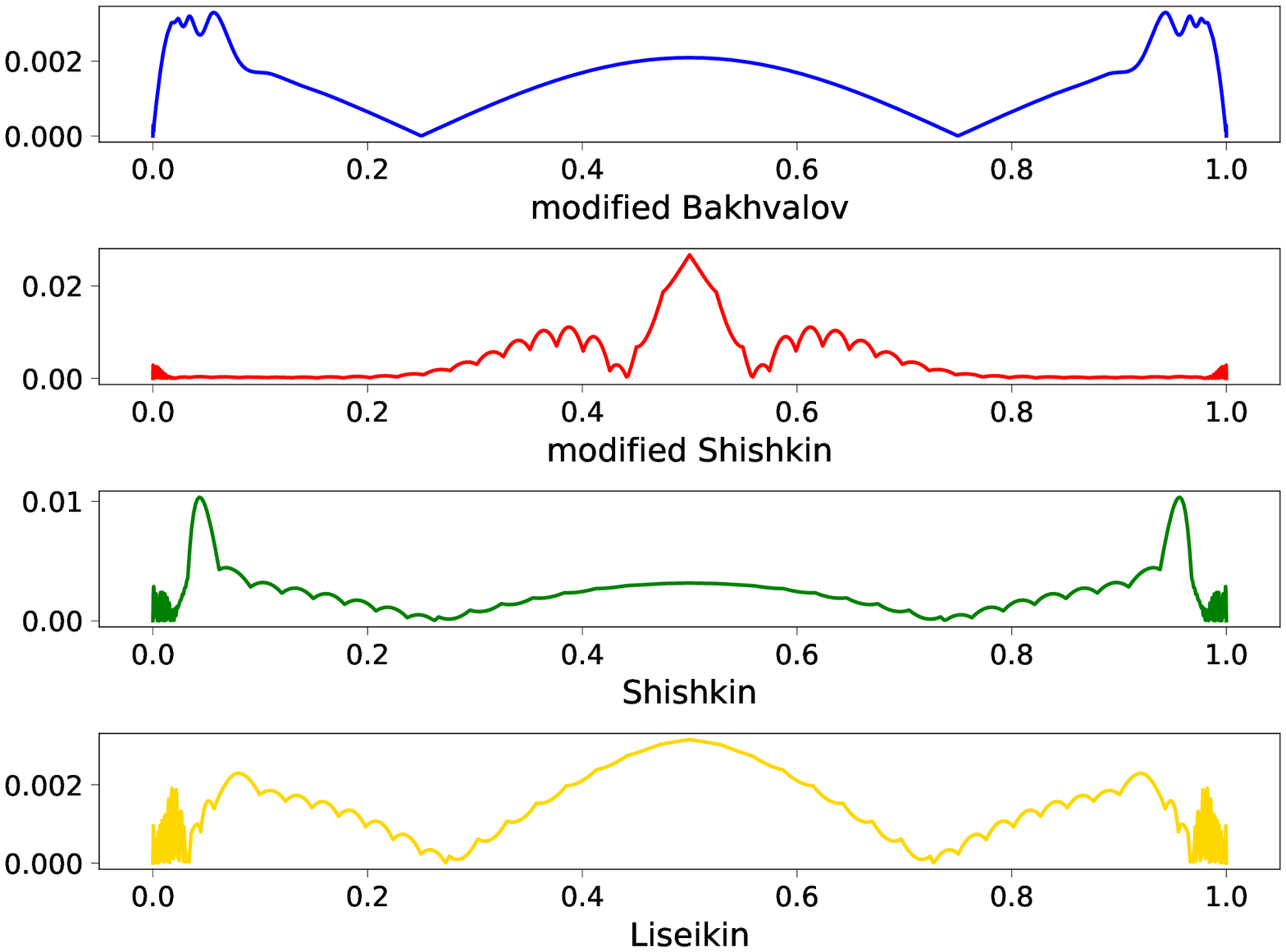}
	\caption{Exact, discrete and global numerical solutions (left up), error  (right up--$N=16$, left down--$N=32$, right down--$N=64$) }
	\label{figure3}
\end{figure} 

\newpage

\begin{table*}[!h]\centering	\tiny
	\begin{tabular}{c cc cc cc cc cc cc}
		\toprule 
		
		\multicolumn{1}{c}{} 
		& \multicolumn{2}{c}{$2^{-3}$}   
		& \multicolumn{2}{c}{$2^{-5}$} 
		& \multicolumn{2}{c}{$2^{-10}$} 
		& \multicolumn{2}{c}{$2^{-20}$}  
		& \multicolumn{2}{c}{$2^{-30}$}
		& \multicolumn{2}{c}{$2^{-40}$}\\
		
		\cmidrule(r{\tabcolsep}){2-3} 
		\cmidrule(r{\tabcolsep}){4-5} 
		\cmidrule(r{\tabcolsep}){6-7}
		\cmidrule(r{\tabcolsep}){8-9}
		\cmidrule(r{\tabcolsep}){10-11}
		\cmidrule(r{\tabcolsep}){12-13}
		
		\multicolumn{1}{c}{$N$}
		& \multicolumn{1}{c}{$E_n$}
		& \multicolumn{1}{c}{Ord }
		& \multicolumn{1}{c}{$E_n$}
		& \multicolumn{1}{c}{Ord }
		& \multicolumn{1}{c}{$E_n$}
		& \multicolumn{1}{c}{Ord }
		& \multicolumn{1}{c}{$E_n$}
		& \multicolumn{1}{c}{Ord }
		& \multicolumn{1}{c}{$E_n$}
		& \multicolumn{1}{c}{Ord }\\
		\midrule
		
		$2^{4}$   &5.277e-2 &3.09 &1.015e-1 &2.08 &1.255e-1 & 2.67 &1.278e-1 &2.65 &1.278e-1 &2.65 &1.278e-1 &2.65 \\ 
		$2^{5}$   &1.234e-2 &2.89 &3.811e-2 &2.83 &3.570e-2 & 2.65 &3.666e-2 &2.64 &3.666e-2 &2.64 &3.666e-2 &2.64 \\ 
		$2^{6}$   &2.819e-3 &2.75 &8.956e-3 &2.87 &9.194e-3 & 2.20 &9.515e-3 &2.26 &9.515e-3 &2.26 &9.515e-3 &2.26 \\ 
		$2^{7}$   &6.374e-4 &2.67 &1.900e-3 &2.74 &2.804e-3 & 1.99 &2.803e-3 &1.99 &2.803e-3 &1.99 &2.804e-3 &1.99 \\ 
		$2^{8}$   &1.429e-4 &2.61 &4.099e-4 &2.61 &9.198e-4 & 2.00 &9.196e-4 &2.00 &9.196e-4 &2.00 &9.196e-4 &2.00 \\ 
		$2^{9}$   &3.175e-5 &2.57 &9.104e-5 &2.52 &2.911e-4 & 2.00 &2.911e-4 &2.00 &2.911e-4 &2.00 &2.911e-4 &2.00 \\ 
		$2^{10}$  &6.988e-6 &2.54 &2.069e-5 &2.42 &8.987e-5 & 2.00 &8.986e-5 &2.00 &8.986e-5 &2.00 &8.986e-5 &2.00 \\
		$2^{11}$  &1-522e-6 &2.52 &4.851e-6 &2.33 &2.719e-5 & 2.00 &2.719e-5 &2.00 &2.719e-5 &2.00 &2.719e-5 &2.00 \\
		$2^{12}$  &3.286e-7 &-    &1.180e-6 & -   &8.091e-6 & -    &8.090e-6 & -   &8.090e-6 &-    &8.090e-6 & - \\
		\midrule \\
		& & & & & & mesh \eqref{meshShishkin1}& & & & & &\\
		\midrule  
		
		$2^{4}$   &2.012e-1 &2.88 &1.944e-1 &2.29 &2.356e-1 &1.80 &2.408e-1 &1.76 &2.408e-1 &1.766 &2.408e-1 &1.76 \\ 
		$2^{5}$   &5.184e-2 &3.06 &6.598e-2 &3.06 &1.010e-1 &2.20 &1.049e-1 &2.17 &1.050e-1 &2.17  &1.050e-1 &2.17 \\ 
		$2^{6}$   &1.082e-2 &3.10 &1.377e-2 &3.13 &3.276e-2 &2.36 &3.450e-2 &2.34 &3.450e-2 &2.34  &3.450e-2 &2.34 \\ 
		$2^{7}$   &2.129e-3 &2.96 &2.547e-3 &2.76 &9.157e-3 &2.40 &9.768e-3 &2.37 &9.769e-3 &2.37  &9.769e-3 &2.37 \\ 
		$2^{8}$   &4.048e-4 &2.94 &5.413e-4 &2.57 &2.381e-3 &2.42 &2.581e-3 &2.36 &2.581e-3 &2.36  &2.581e-3 &2.36 \\ 
		$2^{9}$   &7.453e-5 &2.93 &1.226e-4 &2.50 &5.907e-4 &2.51 &6.619e-4 &2.33 &6.620e-4 &2.33  &6.620e-4 &2.33 \\ 
		$2^{10}$  &1.327e-5 &2.93 &2.818e-5 &2.45 &1.343e-4 &2.82 &1.674e-4 &2.30 &1.674e-4 &2.30  &1.674e-4 &2.30 \\
		$2^{11}$  &2.295e-6 &2.91 &6.478e-6 &2.43 &2.487e-5 &2.92 &4.211e-5 &2.28 &4.211e-5 &2.28  &4.212e-5 &2.28 \\
		$2^{12}$  &3.929e-7 & -   &1.484e-6 &-    &4.233e-6 & -   &1.055e-5 &-    &1.055e-5 & -    &1.055e-5 &-  \\
		\midrule \\
		& & & & & & mesh \eqref{meshShishkin2}& & & & & &\\
		\midrule
		
		$2^{4}$   &5.240e-3 &2.03 &3.038e-2 &1.97 &5.847e-2 &1.89 &6.790e-2 &1.87 &6.822e-2 &1.86 &6.823e-2 &1.86 \\ 
		$2^{5}$   &1.282e-3 &2.00 &7.750e-3 &1.94 &1.577e-2 &1.98 &1.857e-2 &1.97 &1.867e-2 &1.97 &1.867e-2 &1.96 \\ 
		$2^{6}$   &3.186e-4 &2.00 &2.017e-3 &1.96 &4.009e-3 &1.89 &4.754e-3 &1.99 &4.779e-3 &1.99 &4.780e-3 &1.99 \\ 
		$2^{7}$   &7.954e-5 &2.00 &5.163e-4 &1.99 &1.076e-3 &1.68 &1.195e-3 &2.00 &1.202e-3 &2.00 &1.202e-3 &2.00 \\ 
		$2^{8}$   &1.987e-5 &2.00 &1.295e-4 &2.00 &3.355e-4 &2.08 &2.993e-4 &2.00 &3.009e-4 &2.00 &3.010e-4 &2.00 \\ 
		$2^{9}$   &4.969e-6 &2.00 &3.246e-5 &2.00 &7.912e-5 &2.54 &7.487e-5 &2.00 &7.527e-5 &2.00 &7.528e-5 &2.00 \\ 
		$2^{10}$  &1.242e-6 &2.00 &8.117e-6 &2.00 &1.357e-5 &2.00 &1.872e-5 &1.99 &1.882e-5 &2.00 &1.882e-5 &2.00 \\
		$2^{11}$  &3.105e-7 &2.00 &2.029e-6 &2.00 &3.397e-6 &2.00 &4.704e-6 &1.85 &4.705e-6 &2.00 &4.706e-6 &2.00 \\
		$2^{12}$  &7.764e-8 &-    &5.073e-7 & -   &8.494e-7 &-    &1.300e-6 & -   &1.176e-6 & -   &1.176e-6 & - \\
		\midrule \\
		& & & & & & mesh \eqref{meshVulanovic1}& & & & & &\\
		
		\midrule
		
		$2^{4}$   &6.452e-3 &2.01 &1.209e-2 &2.43 &3.055e-2 &1.96 &3.593e-2 &1.95 &3.654e-2 &1.94 &3.660e-2 &1.94 \\ 
		$2^{5}$   &1.593e-3 &2.00 &2.234e-3 &2.18 &7.873e-3 &1.69 &9.332e-3 &1.97 &9.496e-3 &1.85 &9.513e-3 &1.97 \\ 
		$2^{6}$   &3.968e-4 &2.00 &4.897e-4 &2.05 &2.444e-3 &1.78 &2.355e-3 &2.00 &2.397e-2 &2.00 &2.401e-3 &2.00 \\ 
		$2^{7}$   &9.913e-5 &2.00 &1.177e-4 &2.01 &7.102e-4 &1.96 &5.902e-4 &2.00 &6.000e-4 &2.00 &6.017e-4 &2.00 \\ 
		$2^{8}$   &2.477e-5 &2.00 &2.913e-5 &2.00 &1.819e-4 &2.48 &1.476e-4 &2.00 &1.502e-4 &2.00 &1.505e-4 &2.00 \\ 
		$2^{9}$   &6.194e-6 &2.00 &7.264e-6 &2.00 &3.255e-5 &3.27 &4.469e-5 &2.00 &3.757e-5 &2.00 &3.764e-5 &2.00 \\ 
		$2^{10}$  &1.548e-6 &2.00 &1.814e-6 &2.00 &3.355e-6 &1.99 &1.354e-5 &2.00 &9.393e-6 &2.00 &9.410e-6 &2.00 \\
		$2^{11}$  &3.871e-7 &2.00 &4.536e-7 &2.00 &8.462e-7 &1.54 &3.867e-6 &2.00 &2.348e-6 &2.00 &2.352e-6 &2.00 \\
		$2^{12}$  &9.678e-8 & -   &1.134e-7 & -   &2.905e-7 & -   &1.196e-6 & -   &6.604e-7 & -   &5.881e-7 & -\\
		\midrule \\
		& & & & & & mesh \eqref{meshLiseikin}& & & & & &\\

		\bottomrule
	\end{tabular}
	\caption{Values of $E_N$ and Ord}
	\label{table1}
\end{table*} 

\newpage
\section{Conclusion}
In the present paper we performed the construction of a numerical solution for the one--dimensional singularly--perturbed reaction--diffusion boundary--value problem. The class of different schemes was constructed, and we proved the existence and uniqueness of the discrete numerical solution. After that, we proved $\varepsilon$--uniformly convergence of the constructed class of different schemes on the modified Shishkin mesh of order 2. A global numerical solution was constructed based on a linear spline and proved that the order of the error  value is $\mathcal{O}\left(\ln^2N/N^2\right).$ The numerical experiments at the end of the paper confirm the theoretical results. The results obtained by using a global numerical solution based on a natural cubic spline and the Shishkin, the modified Bakhvalov and last but not least the Liseikin mesh are included in the numerical experiments. Although, the theoretical analysis for these meshes wasn't done, the results suggest that the order  of convergence is 2 for all of them. Especially,  good results have been achieved by using the Liseikin mesh.

\newpage

\end{document}